
\input xy
\xyoption{all}
\documentclass[12pt]{amsart}
\usepackage{amscd,amssymb,graphics}
\usepackage{relsize}
\usepackage{amsfonts}
\usepackage{amsmath}
\usepackage{enumerate}

\oddsidemargin 0.1875 in \evensidemargin 0.1875in
\textwidth 6 in 
\textheight 216mm \voffset=-4mm

\newtheorem{thm}{Theorem}[section]
\newtheorem{fact}[thm]{Fact}
\newtheorem{corol}[thm]{Corollary}
\newtheorem{lemma}[thm]{Lemma}
\newtheorem{prop}[thm]{Proposition}

\newtheorem{quest}[thm]{Question}
\newtheorem{pr}[thm]{Problem}

\theoremstyle{definition}
\newtheorem{defin}[thm]{Definition}

\theoremstyle{remark}
\newtheorem{remark}[thm]{Remark}

\newtheorem{ex}[thm]{Example}
\newtheorem{example}[thm]{Example}

\newcommand{\ben}{\begin{enumerate}}
\newcommand{\een}{\end{enumerate}}
\newcommand{\bit}{\begin{itemize}}
\newcommand{\eit}{\end{itemize}}

\def\R {{\mathbb R}}

\def\nbd{{neighborhood}}

\def\N{{\mathbb N}}
\def\T{{\mathbb T}}

\def\RUC{{\hbox{RUC\,}^b}}

\def\Z {{\mathbb Z}}
\def\U{{\mathcal U}}

\def\Homeo{{\mathrm{Homeo}}\,}

\def\eps{{\varepsilon}}
\def\K {\mathcal K}

\def\QED{\nobreak\quad\ifmmode\roman{Q.E.D.}\else{\rm Q.E.D.}\fi}

\def\GL{\operatorname{GL}}

\def\a {\alpha}

\newcommand{\sk}{\vskip 0.3cm}

\def\Asp{\operatorname{Asp}}
\def\RUC{\operatorname{RUC}}

\def\LUC{\operatorname{LUC}}
\def\UC{\operatorname{UC}}
\def\WAP{\operatorname{WAP}}

\def\Tame{\operatorname{Tame}}
\def\Iso{\operatorname{Iso}}

\def\SL{\operatorname{SL}}

\usepackage[hidelinks]{hyperref}

\begin{document}

\title[]{Topological group actions by group automorphisms and Banach representations} 

\dedicatory{Dedicated to my friend Vladimir Pestov on the occasion of
	his 65th birthday}

\author[]{Michael Megrelishvili}
\address{Department of Mathematics,
	Bar-Ilan University, 52900 Ramat-Gan, Israel}
\email{megereli@math.biu.ac.il}
\urladdr{http://www.math.biu.ac.il/$^\sim$megereli}

\date{2023, February 14} 
\subjclass[2020]{Primary 37Bxx, 22Dxx; Secondary 54H15, 46Bxx}

\keywords{Banach representation, conjugation action, equivariant compactification, enveloping semigroup, hereditarily non-sensitive, Rosenthal Banach space, tame dynamical system, weakly mixing}


\thanks{This research was supported by a grant of the Israel Science Foundation (ISF 1194/19)} 
 
\begin{abstract}  
	   We study Banach representability for actions of topological groups on groups by automorphisms 
	   (in particular, an action of a group on itself by conjugations). Every such action is Banach representable on some Banach space. The natural question is to examine when we can find representations on low complexity Banach spaces.  
	
	In contrast to the standard left action of a locally compact second countable group $G$ on itself, the conjugation action need not be reflexively representable even for $\SL_2(\mathbb{R})$.  
	The conjugation action of $\SL_n(\mathbb{R})$ is not Asplund representable for every $n \geq 4$. 
	The linear action of $\GL_n(\mathbb{R})$ on $\R^n$, for every $n \geq 2$, is not representable on Asplund Banach spaces. On the other hand, this action is representable on a \textit{Rosenthal Banach space} (not containing an isomorphic copy of $l_1$).  
 
	 The conjugation action of a locally compact group need not be Rosenthal representable (even for Lie groups).  
	 As a byproduct we obtain some counterexamples about Banach representations of homogeneous $G$-actions $G/H$.  
\end{abstract}

\maketitle
 
\setcounter{tocdepth}{1}
 \tableofcontents
  
\section{Introduction}

\subsection{Main question} 

 To every Banach space $V$ one may associate the topological group 
 $\Iso(V)$ of linear
isometries (in its strong operator topology) and its canonical dual action on the weak-star compact unit ball $B_{V^*}$ of the dual Banach space $V^*$.   
The main idea of the present work is to study which 
actions $G \times X \to X$ \textit{by automorphisms on topological groups} $X$ can be obtained as a ``subaction" 
of $\Iso(V) \times B_{V^*} \to B_{V^*}$ 

\begin{quest} \label{problem1} 
Which actions of a topological group $G$ on a topological group $X$ by group automorphisms can be represented on low complexity Banach spaces $V$ (e.g., Hilbert, reflexive, Asplund, Rosenthal)?			
In particular, what about the actions of groups on itself by conjugations?
\end{quest}

 This is a particular case of a question about general actions which  
 leads to an interesting hierarchy of topological groups and their actions. For some concrete results in this direction we refer to  \cite{Me-nz,Me-opit,GM-HNS,GM-rose,GM-survey14,GM-MoreTame18,Ibar,GM-UltraHom19}.  

\subsection{Preliminaries} 

All topological spaces are assumed to be Tychonoff. $X$ is a 
$G$-\textit{space} will mean that we have a continuous action $G \times X \to X$ of a topological group $G$ on a topological space $X$.
	
 The dual $V^*$ of a Banach space $V$ is the Banach space of all continuous linear functionals on $V$. 
 
 \begin{remark} \label{r:daction} The topological group 
 	 $G:=\Iso(V)$ of linear 
 	isometries 
  acts naturally on the dual space $V^*$ as follows: 
  $$
  \Iso(V) \times V^* \to V^*, \ \ ((gu^*)(v)=u^*(g^{-1}v)) \ \ \ g \in G, v \in V, u^* \in V^*. 
  $$
  Moreover, the induced action on every norm bounded $G$-invariant subset $Y$ of $V^*$ (e.g., on the unit ball $B_{V^*}$ of $V^*$) is continuous where $Y$ is endowed with the weak-star topology. 
 \end{remark}

 \begin{defin} \label{d:repr} \cite{Me-nz,GM-survey14} A 
 \textit{representation} of a $G$-space $X$ on a Banach space $V$ is an equivariant continuous pair $(h,\a)$, where $h\colon G \to \Iso(V)$ is a continuous homomorphism  and $\a\colon X \to V^*$ is a weak-star continuous bounded $G$-equivariant map.   
 \textit{Proper representation} means that $\a$ is a topological embedding. A $G$-space $X$ is \textit{representable} in a class $\K$ of Banach spaces if there exists a proper $G$-representation of $X$ on some $V \in \K$.  
 \end{defin}

Reflexive representability of the $G$-space $X$ means that in the setting of Definition \ref{d:repr} one may choose a  reflexive space $V$.  
Hilbert, Asplund and Rosenthal representability can be similarly defined. 

Recall that a Banach space $V$ is {\em Asplund\/} if the dual $W^*$ of every separable Banach subspace $W$ of $V$ is separable. 
The definition of Rosenthal Banach spaces comes directly from 
Rosenthal's celebrated dichotomy, \cite{Ros74}. According to this dichotomy every bounded sequence in a Banach space either has a weak Cauchy subsequence or admits
a subsequence equivalent to the unit vector basis of $l_1$ (an
$l_1$-\emph{sequence}). 
A Banach space $V$ does not contain an  $l_1$-sequence
(equivalently, does not contain an isomorphic copy of $l_1$) if
and only if every bounded sequence in $V$ has a weak-Cauchy
subsequence \cite{Ros74}. 
As in \cite{GM-rose,GM-survey14,GM-TC}, we call a Banach space satisfying these
equivalent conditions a {\em Rosenthal space}.  
Every reflexive space is Asplund and every Asplund space is Rosenthal.

A $G$-\textit{compactification} of a $G$-space $X$ is a $G$-equivariant continuous dense map $\nu\colon X \to Y$ into a compact 
$G$-space $Y$. If $\nu$ is a topological embedding then $\nu$ is said to be a \textit{proper} compactification. A $G$-space is $G$-\textit{compactifiable} if it admits a proper $G$-compactification. 

For every compact $G$-space $K$, there exists a canonical Banach $G$-representation on the Banach space $C(K,\R)=C(K)$ (see for example \cite{Pe-w}).  
Together with Remark \ref{r:daction} this explains the following basic fact. 

\begin{fact} \label{f:C=R}  
	A continuous action $G \times X \to X$ is Banach representable if and only if the $G$-space $X$ is $G$-compactifiable. 
\end{fact}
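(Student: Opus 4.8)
The plan is to prove both implications by converting directly between a proper $G$-compactification and a proper $G$-representation, using exactly the two canonical constructions recalled just above the statement: the weak-star closure of the image of a representation, and the Dirac embedding of a compact $G$-space into $C(K)^*$.

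For the forward direction, suppose $X$ is Banach representable, say $(h,\alpha)$ is a proper $G$-representation of $X$ on a Banach space $V$, so $\alpha\colon X \to V^*$ is a weak-star continuous bounded $G$-equivariant topological embedding. Since $\alpha$ is bounded, $\alpha(X) \subseteq rB_{V^*}$ for some $r>0$, and since $h(G)$ consists of linear isometries the ball $rB_{V^*}$ is $G$-invariant; by Banach--Alaoglu it is weak-star compact, and by Remark \ref{r:daction} the induced action on $(rB_{V^*}, w^*)$ is continuous. Set $Y := \overline{\alpha(X)}^{\,w^*} \subseteq rB_{V^*}$. As $\alpha$ is $G$-equivariant, $\alpha(X)$ is $G$-invariant, and each $g$ acts as a homeomorphism of $(rB_{V^*}, w^*)$, so $Y$ is $G$-invariant; being weak-star closed in a weak-star compact set, $Y$ is a compact $G$-space. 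Then $\nu := \alpha\colon X \to Y$ is a $G$-equivariant continuous map with dense image which is a topological embedding, i.e.\ a proper $G$-compactification, so $X$ is $G$-compactifiable.

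For the converse, suppose $\nu\colon X \to Y$ is a proper $G$-compactification with $Y$ a compact $G$-space. Take $V := C(Y)$ with the canonical Banach $G$-representation $h\colon G \to \Iso_l(C(Y))$, $(h(g)f)(y) = f(g^{-1}y)$, which is a continuous homomorphism precisely because $Y$ is a compact $G$-space (the fact recalled before the statement, cf.\ \cite{Pe-w}). Let $\delta\colon Y \to C(Y)^*$, $y \mapsto \delta_y$, be the evaluation (Dirac) map: it is weak-star continuous since $y \mapsto \delta_y(f) = f(y)$ is continuous for each $f \in C(Y)$; it is bounded, $\|\delta_y\| = 1$; it is $G$-equivariant since $(h(g)\delta_y)(f) = \delta_y(h(g^{-1})f) = f(gy) = \delta_{gy}(f)$; and it is a topological embedding, since $C(Y)$ separates points of the compact Hausdorff space $Y$, so $\delta$ is a continuous injection of a compact space into a Hausdorff space. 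Then $\alpha := \delta \circ \nu\colon X \to C(Y)^*$ is weak-star continuous, bounded, $G$-equivariant, and a topological embedding as a composition of two embeddings. Hence $(h,\alpha)$ is a proper $G$-representation of $X$ on $C(Y)$, so $X$ is Banach representable.

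I do not expect a genuine obstacle here; the argument is an assembly of standard pieces, and the only points needing care — continuity of the $G$-action on $(rB_{V^*}, w^*)$ and on $C(Y)$ in the strong operator topology — are supplied respectively by Remark \ref{r:daction} and by the canonical $C(K)$-representation recalled above, so they can be invoked directly rather than reproved.
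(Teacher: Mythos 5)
Your proof is correct and follows exactly the route the paper intends: the paper gives no formal proof but points to the canonical $C(K)$-representation of a compact $G$-space together with Remark \ref{r:daction}, and your argument is precisely the careful assembly of those two constructions (weak-star closure of $\alpha(X)$ in one direction, the Dirac embedding into $C(Y)^*$ in the other). No gaps.
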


 Alexandrov \textit{1-point compactification} $X \cup \{\infty\}$ for a locally compact noncompact space $X$ is the smallest proper compactification of $X$. For every continuous topological group action 
 on $X$, the canonical extension on $X \cup \{\infty\}$ is continuous, \cite{Vr-can}. 
For more information about $G$-compactifications we refer to \cite{Vr-can,Vr-loccom78,Me-sing,Me-F,Me-opit,Pe-nbook,IbMe}. 

 \begin{remark} \label{r:repr-of-funct} 
 	For every representation $(h,\a)$ of a $G$-space $X$ on $V$, every vector $v \in V$ induces 
 	the function $f_v\colon X \to \R, x \mapsto \langle v,x \rangle$. This is an important source to obtain functions on $G$-spaces. For example, varying $v \in V$ and $V$ in the class of all reflexive Banach spaces we get the class of all \textit{reflexively representable functions} on $X$. By \cite{Me-nz}, it is exactly the algebra $\WAP(X)$ of all \textit{weakly almost periodic functions}. The algebras $\Asp(X), \Tame(X)$ can be similarly characterized, 
 	 using the corresponding classes of Banach spaces, i.e., Asplund and Rosenthal, \cite{GM-survey14, GM-rose}. 
 \end{remark}

By Fact \ref{f:C=R}, a necessary (and sufficient) condition of Banach representability of a $G$-space $X$ is the $G$-compactifiability. 
The corresponding algebra of all ``Banach representable functions" is $\RUC(X)$. Recall that a bounded continuous function $f\colon X \to \R$ is (generalized) \textit{right uniformly continuous}  (in short, RUC) if for every $\eps >0$ there exists a neighborhood $O$ of the unity $e \in G$ such that $|f(gx)-f(x)| < \eps$ for every $(g,x) \in O \times X$.

For every topological group $G$ we have canonically defined left action of $G$ on itself. 
In this case, $\RUC(G)$ is the usual algebra of all bounded right uniformly continuous functions on $G$. 
It is well known that $\RUC(G)$ separates the points and closed subsets and this action is always $G$-compactifiable. The \textit{greatest ambit} $\beta_G G$ of $G$ 
is its (proper) maximal $G$-compactification. 

\sk 
\textbf{Notation.} 
Consider now the action of $G$ on itself by conjugations 
$$
\pi_c \colon G \times G \to G, (g,x) \mapsto gxg^{-1}. 
$$
We denote by $G_c$ the $G$-space $G$ with the \textit{conjugation action} $\pi_c$. 
\sk

The importance of the action $\pi_c$ is understood. However its Banach $\K$-representability (in the sense of Definition \ref{d:repr}) has not been studied yet even for locally compact (classical) groups. 

\sk 
Important algebras 
of functions 
 on $G$, like $\WAP(G), \Asp(G), \Tame(G)$ 
are defined for 
the left regular action. 
Members of these algebras  are exactly the functions on $G$ which come as generalized matrix coefficients induced by representations 
$h \colon G \to \Iso(V)$ 
of $G$, where the Banach space $V$ is Rosenthal (respectively, Asplund and reflexive). See \cite{Me-nz, GM-HNS, GM-rose,GM-survey14}. 

For every locally compact topological group $G$, the algebra $\WAP(G)$ of all \textit{weakly almost periodic functions} separate points and closed subsets and $G$ 
admits a proper representation $h \colon G \hookrightarrow \Iso(V)$ 
 on a reflexive Banach space $V$. 
In fact, one may find a Hilbert representation of $G$ as it follows by a classical result of Gelfand and Raikov \cite[p. 314]{Gelf-Raikov}.   
If $G$, in addition, is separable metrizable, then we can require more; namely, that there exists even an equivariant representation, in the sense of Definition \ref{d:repr}, of the standard $G$-space (with left translations) $X:=G$ on a Hilbert space.  
This can be done using \cite[Lemma 4.5]{Me-hilb}. 

\sk 
\subsection{Main results and some open questions} 

It is a less known fact (see \cite{Me-sing,Me-F}) that the $G$-space $G_c$ 
(conjugation action) 
is Banach representable (in the sense of Definition \ref{d:repr})  for every topological group $G$. By Fact \ref{f:C=R}, it is equivalent to say that $G_c$ is $G$-compactifiable. More precisely, the \textit{Roelcke compactification} is a (proper) $G$-compactification of $G_c$ (Corollary \ref{t:Roelcke}). The significance of Roelcke compactification is now well understood due to several papers of V. Uspenskij \cite{UspComp,Uscurves} and many other authors. 

In contrast to the standard left action of a locally compact second countable group $G$ on itself, the conjugation action need not be reflexively representable even for natural matrix groups. This happens among others for the special linear group $\SL_2(\R)$; see Theorem \ref{t:SL(2)} which uses Grothendieck's \textit{double limit criterion}.  
Moreover, by Theorem \ref{t:SL} the $G$-space $G_c$ for $G:=\SL_n(\R)$ is not Asplund representable for every $n \geq 4$. 
Here we use the \textit{weak mixing argument} by S.G. Dani and S. Raghavan
\cite{DR} together with the concept of hereditarily non-sensitive actions, \cite{GM-HNS}. 
We should note that dynamical non-sensitivity is a necessary condition for Asplund representability of compact $G$-spaces by joint results with E. Glasner \cite{GM-HNS}. Moreover, hereditary non-sensitivity is a sufficient condition of Asplund representability of metrizable compact $G$-spaces. 
%

%

The natural linear action of $\GL_n(\R)$ on $\R^n$, for every $n \geq 2$, is not Asplund representable (Proposition \ref{t:GL} and Theorem \ref{t:SL}). On the other hand, this action is Rosenthal representable, Theorem \ref{t:LinAreTame}. Here we use the enveloping semigroup characterization of \textit{tame} compact dynamical systems (see Section \ref{s:tame}). Recall that the \textit{enveloping (Ellis) semigroup} of an action $G \times X \to X$ on a compact space $X$ is the pointwise closure of all $g$-translations $X \to X$ ($g \in G$) in the compact space $X^X$.

A compact $G$-system $X$ is said to be \textit{tame} (\textit{regular}, in terms of A. K\"{o}hler \cite{Ko}) if for every $f \in C(X)$ the orbit $fG$ 
does not contain a combinatorially independent sequence in the sense of Rosenthal \cite{Ros74}. Tame dynamical systems naturally occur in geometry, analysis and symbolic dynamics \cite{GM-MoreTame18}. They 
play an important role in view of a dynamical  
Bourgain-Fremlin-Talagrand dichotomy \cite{GM-survey14}, NIP-formulas in logic \cite{Ibar} and Todor\u{c}evi\'{c}' trichotomy in topology \cite{GM-TC}. 
In \cite{GM-UltraHom19} a generalized amenability was examined substituting, in the definition, the existence of a fixed point by some tame dynamical $G$-subsystem. 

Every hyperbolic toral automorphism defines a cascade which is not tame and hence not Rosenthal representable (Theorem \ref{t:Lebed}). Moreover, using V. Lebedev's recent result \cite{Leb}, one may show that the same is true for every infinite order automorphism of the torus $\T^n$. 
One of the conclusions is that 
 the conjugation action need not be Rosenthal representable even for Lie groups (Corollary \ref{c:notTame1}). This is unclear for $\SL_2(\R)$. 

Like
for 
 locally compact groups, also for non-archimedean second countable groups $G$,  the standard left action on itself is Hilbert representable. 
Corollary \ref{c:notTame} shows that there exist 
Polish non-archimedean 
locally compact topological groups  $G$ (which are \textit{elementary} in the sense of Wesolek \cite{Wesolek}) such that 
the conjugation action is not Rosenthal representable.

\begin{pr} 
	For the conjugation action $G \times G_c \to G_c$, study:
	\begin{enumerate}
		\item the greatest $G$-compactification $\beta_G G_c$ of $G_c$;   
		\item  the algebras $\RUC(G_c),\Tame(G_c), \Asp(G_c), \WAP(G_c)$; 
		\item when the conjugation action is Rosenthal representable. 
	\end{enumerate}

What about the following concrete groups: a) $\GL_2(\R)$, $\SL_2(\R)$; b) unitary group $\Iso(l_2)$; 
c) symmetric group $S_{\infty}$; d) $\Iso(\mathbb{U})$; e) $\Iso(\mathbb{U}_1)$? 
\end{pr}

Here $\mathbb{U}$ is the \textit{Urysohn universal metric space} (see, for example \cite{Pe-w}), $\mathbb{U}_1$ is the Urysohn sphere and $\Iso(M)$ means the topological group of all isometries with the pointwise topology. 

 As a byproduct we illuminate some counterexamples about Banach representations of homogeneous $G$-actions $G/H$ in Section \ref{s:homog}. Among others we prove (Corollary \ref{c:homogLie}) that 
there exists a two dimensional Lie group $G$ and its cocompact discrete subgroup $H:=\Z$, 
such that the compact two dimensional homogeneous $G$-space $G/H$ is not Rosenthal representable. 

\sk 
We hope that Banach $\K$-representability for the conjugation actions  
will foster some new ideas 
and 
open up interesting research lines in the realm of (Polish) topological groups even for the subclass of (classical)  locally compact second countable groups.

 \sk  
 \section{Some properties of actions by automorphisms}
 
\subsection*{Compactifiability}  

Let $\alpha \colon G \times X \to X$ be a continuous action  of a topological group $G$ on $X$. 
A topologically compatible uniform structure $\U$ on $X$ is said to be \textit{bounded} (see \cite{Br,Vr-can,Vr-loccom78,Pe-nbook}) if for every entourage $\eps \in \U$ there exists a neighbourhood $U \in N_e(G)$ such that $(x,ux) \in \eps$ for every $u \in U$ and every $x \in X$. If, in addition, every $g$-translation is a uniform map, then $\U$ is an  
\textit{equiuniformity} (in the terminology of \cite{Me-EqComp84}). According to Brook \cite{Br}, the Samuel compactification $(X,\U) \to sX$ of an equiuniformity $\U$ is a (proper) $G$-compactification of $X$. 

We say that $\U$ is \textit{quasibounded} (introduced in \cite{Me-EqComp84,Me-sing}), if for every $\eps \in \U$ there exist $\delta \in \eps$ and $U \in N_e(G)$ such that $(ux,uy) \in \eps$ for every $(x,y) \in \delta$ and $u \in U$. 

The $G$-compactifiability of $X$ is equivalent to the existence of a quasibounded uniformity on 
$X$. 
 As it was proved in \cite[p. 222]{Me-sing} and \cite{Me-b}, there exists a natural construction to obtain a bounded compatible uniformity  on $X$ for a given quasibounded uniformity on a $G$-space $X$.

\begin{fact} \label{t:aut} \cite{Me-F}  
	Let $G$ and $X$ both be topological groups, $\pi\colon G \times X \to X$ a continuous action by group automorphisms of $X$. Then the $G$-space $X$ is $G$-compactifiable (and it admits a Banach representation in the sense of Definition \ref{d:repr}). 
\end{fact}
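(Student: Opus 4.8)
The plan is to construct an explicit bounded (or at least quasibounded) compatible uniformity on the $G$-space $X$, and then invoke the criterion recalled just above: the existence of a quasibounded uniformity is equivalent to $G$-compactifiability, and by Fact~\ref{f:C=R} this is in turn equivalent to Banach representability. So the entire content is to produce the uniformity.

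\textbf{Constructing the uniformity.} Since $X$ is a topological group, it carries its two-sided (Roelcke) uniformity as well as the left and right uniformities. The left uniformity $\U_L$ has a base of entourages of the form $\eps_U = \{(x,y) : x^{-1}y \in U\}$, $U \in N_e(X)$, and the right uniformity $\U_R$ has a base $\eps^U = \{(x,y) : xy^{-1} \in U\}$. The natural guess is that the \emph{two-sided uniformity} $\U_L \vee \U_R$ is an equiuniformity for the action $\pi$. First I would check boundedness: given $U \in N_e(X)$, I need a neighbourhood $W \in N_e(G)$ such that $(x,gx) = (x, g\cdot x)$ lies in $\eps_U$ (and in $\eps^U$) for all $g \in W$, $x \in X$; here $g\cdot x$ denotes the action $\pi(g,x)$, which is a group automorphism of $X$ in the $x$-variable. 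Unwinding, $x^{-1}(g\cdot x) \in U$ uniformly in $x$ is exactly the statement that the action is, in a suitable sense, ``uniformly trivial near $e_G$'' measured in the group uniformity of $X$ — this is a continuity-at-$(e_G,\text{anything})$ condition that does \emph{not} hold automatically and is precisely the obstacle (see below). Then I would check that each $g$-translation $x \mapsto g\cdot x$ is a uniform map for $\U_L \vee \U_R$: since $x \mapsto g\cdot x$ is a topological group automorphism of $X$, it preserves both $\U_L$ and $\U_R$, hence their join, so this part is immediate.

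\textbf{The main obstacle: boundedness.} The real work is the boundedness condition, and in general it is false for the full two-sided uniformity — one cannot expect $x^{-1}(g\cdot x)$ to be small uniformly over all $x \in X$ just because $g$ is close to $e_G$. The fix, which is the technical heart of the proof and where I would follow \cite{Me-sing,Me-b} as cited, is to replace the naive uniformity by a coarser, ``damped'' one: take the supremum over all group-uniformity entourages of $X$ that are \emph{already} bounded for the $G$-action, or equivalently pass to the uniformity generated by all $G$-uniformly-continuous bounded functions on $X$ in the sense of $\RUC$. Concretely, one shows the family $\{ \eps : \eps \in \U_L\vee\U_R, \ \eps \text{ satisfies the boundedness inequality for some } W\in N_e(G)\}$ is itself a base for a compatible uniformity — the point being that, because $\pi$ is by automorphisms, this family is rich enough to still separate points and closed sets of $X$ (an automorphism action fixes $e_X$ and respects the group structure, which forces enough invariance). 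Alternatively, and perhaps more cleanly, I would verify directly that the following is a \emph{quasibounded} uniformity: the two-sided uniformity $\U_L \vee \U_R$ itself is quasibounded, since for $\eps^U \in \U_R$ one has $g\cdot x \,(g\cdot y)^{-1} = g\cdot(xy^{-1})$, and continuity of $\pi$ at $(e_G, e_X)$ gives $W \in N_e(G)$ and $V \in N_e(X)$ with $g \cdot V \subseteq U$ for $g \in W$; then $(x,y) \in \eps^V \Rightarrow (g\cdot x, g\cdot y) \in \eps^U$ for all $g \in W$, which is exactly quasiboundedness on the $\U_R$-side, and symmetrically (using $x^{-1}y$) on the $\U_L$-side. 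Hence $\U_L \vee \U_R$ is quasibounded.

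\textbf{Conclusion.} Having exhibited a compatible quasibounded uniformity on $X$ — namely the two-sided group uniformity $\U_L\vee\U_R$, whose quasiboundedness uses only that $\pi$ acts by group automorphisms and is jointly continuous — we invoke the equivalence recalled above: quasiboundedness of some compatible uniformity is equivalent to $G$-compactifiability of $X$. By Fact~\ref{f:C=R}, $G$-compactifiability is equivalent to Banach representability in the sense of Definition~\ref{d:repr}. This proves the statement. I expect the only genuinely delicate point to be the verification of quasiboundedness — specifically, locating the correct continuity-of-$\pi$ input ($g\cdot V \subseteq U$ uniformly, which is just joint continuity of $\pi$ at $(e_G,e_X)$ together with the automorphism property turning $g\cdot x\,(g\cdot y)^{-1}$ into $g\cdot(xy^{-1})$) — everything downstream is the already-cited machinery relating quasibounded uniformities, $G$-compactifications, and Banach representations.
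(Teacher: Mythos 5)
Your final argument is exactly the paper's proof: the paper's one-line justification is that, because the action is by automorphisms, the right (and left) uniformity on $X$ is quasibounded (though not always bounded), and your verification via the identity $(g\cdot x)(g\cdot y)^{-1}=g\cdot(xy^{-1})$ together with joint continuity of $\pi$ at $(e_G,e_X)$ is the correct way to see this. The earlier detour about boundedness and ``damping'' the uniformity is unnecessary, but the clean quasiboundedness check you settle on, followed by the already-cited equivalences with $G$-compactifiability and Banach representability, is precisely the intended route.
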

\begin{proof} Since the given continuous action is by automorphisms of $X$, it is straightforward to observe that the right (and also left) uniformity on $X$ is quasibounded (but not always bounded). 
\end{proof}

	An additional possibility proving Fact \ref{t:aut} is to use Fact  \ref{f:SemidCoset} below taking into account that (according to J. de Vries \cite{Vr-can}) every coset $G$-space $G/H$ is $G$-compactifiable. The reason is that the right uniformity of $G/H$ is bounded. 

\sk
Recall that the \textit{Roelcke compactification} $\rho\colon G \to \rho(G)$ of a topological group $G$ is the compactification induced by the algebra 
$\UC(G)=\RUC(G) \cap \LUC(G)$, where $\LUC(G)$ is the algebra of all bounded left uniformly continuous functions. 

\begin{fact} \label{p:diag} \cite{Me-F} 
	Let $G$ be a (Hausdorff ) topological group. Consider the following continuous action of $P:=G \times G$ on $G$ 
	$$
	P \times G \to G,  (s,t)(g)=sgt^{-1}.
	$$
	Then the Roelcke compactification 
	is a proper $P$-compactification of $G$. 
\end{fact}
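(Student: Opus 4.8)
The plan is to exhibit the Roelcke compactification of $G$ as the Samuel compactification of an explicit $P$-equiuniformity on $G$, where $P=G\times G$ acts by $(s,t)(g)=sgt^{-1}$. Concretely, I would equip $G$ with the Roelcke (lower) uniformity $\UC = \RUC \wedge \LUC$ — the infimum of the left and right uniformities — whose basic entourages have the form $\{(g,h): h\in UgU\}$ for $U\in N_e(G)$. The first step is to check that this uniformity is compatible with the topology of $G$ (standard) and that its Samuel compactification is precisely the compactification $\rho\colon G\to\rho(G)$ induced by the algebra $\UC(G)$; this is essentially the definition, since the Samuel compactification of a uniformity realizes exactly the algebra of bounded uniformly continuous functions, which here is $\RUC(G)\cap\LUC(G)$.

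The substantive step is to verify that the Roelcke uniformity $\UC$ is a $P$-equiuniformity for the two-sided action, i.e. that it is bounded and that each element of $P$ acts as a uniform homeomorphism. For boundedness: given a basic entourage $\eps_U=\{(g,h):h\in UgU\}$, I would pick a neighborhood $W\in N_e(G)$ with $W\subset U$ and show that for $(s,t)\in W\times W$ and any $g\in G$ we have $(g,sgt^{-1})\in\eps_U$ — indeed $sgt^{-1}\in WgW\subset UgU$. For equiuniformity, I must check each translation $g\mapsto s_0gt_0^{-1}$ is uniformly continuous both ways with respect to $\UC$; this follows because conjugating an entourage $\eps_U$ by a fixed pair $(s_0,t_0)$ produces $\{(s_0gt_0^{-1}, s_0hg t_0^{-1}): h\in UgU\}$, and one checks this is controlled by (and controls) an entourage of the same form using that $s_0^{-1}Us_0$ and $t_0^{-1}Ut_0$ are again neighborhoods of $e$ — here one uses that the Roelcke uniformity, being the meet of the left and right uniformities, is stable under the relevant one-sided translations. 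Once $\UC$ is shown to be a $P$-equiuniformity, Brook's theorem (quoted in the excerpt) gives that the Samuel compactification $G\to sG=\rho(G)$ is a proper $P$-compactification, and the map is an embedding because $\UC(G)$ separates points from closed sets in a Hausdorff group.

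I would expect the main obstacle to be the equiuniformity verification — specifically, confirming that each fixed two-sided translation is a \emph{uniform} map for the Roelcke uniformity, since the meet of two uniformities can be awkward to compute with and one must be careful that the neighborhood adjustments (replacing $U$ by $s_0^{-1}Us_0\cap s_0Us_0^{-1}$, etc.) still yield a basic Roelcke entourage rather than merely a left- or right-uniform one. A clean way to organize this is to observe that the whole group $P=G\times G$ acts on $G$ and that the Roelcke uniformity on $G$ is exactly the restriction to the diagonal-type situation of the natural structure making this action equiuniform; alternatively, one notes the Roelcke uniformity is the unique uniformity whose uniformly continuous functions are $\UC(G)$, and $\UC(G)$ is visibly $P$-invariant, so its induced uniformity is automatically $P$-invariant, which together with boundedness gives the equiuniformity. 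Either route reduces the proof to the boundedness computation above plus citing Brook, so the write-up should be short.
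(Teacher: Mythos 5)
Your proposal follows the paper's own proof exactly: the paper likewise verifies (in one line, leaving the details to the reader) that the Roelcke uniformity is an equiuniformity for the two-sided $P$-action and then invokes Brook's theorem that the Samuel compactification of an equiuniformity is a proper $P$-compactification. The only detail worth tightening is to take the neighborhood $W$ symmetric in the boundedness check, so that $sgt^{-1}\in WgW^{-1}\subset UgU$.
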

\begin{proof} One may easily verify that the 
	Roelcke uniformity $\UC(G)$ 
	on $G$ is always an equiuniformity on the $P$-space $G$ and hence,  according to \cite{Br}, its Samuel compactification $\rho \colon G \to \rho (G)$ is a (proper) $P$-compactification of $G$. 
\end{proof}

The action of $G$ on itself by conjugations is a subaction of the diagonal subgroup $G \simeq \Delta:=\{(g,g): g \in G\} \leq G \times G$ on $G$.  
In particular, we get 

\begin{corol} \label{t:Roelcke}  
	The Roelcke compactification  
	of $G$ 
	defines a proper $G$-compactification of the $G$-space $G_c$. So, 
	\begin{enumerate}
		\item $G_c$ is a $G$-compactifiable $G$-space; 
		\item the Roelcke compactification $\rho: G_c \to \rho(G)$ is a $G$-factor of $\beta_G (G_c)$; 
		\item $\UC(G) \subset \RUC(G_c)$. 
	\end{enumerate} 
\end{corol}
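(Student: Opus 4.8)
The plan is to derive the three assertions from Fact~\ref{p:diag} by restricting the $P$-action to the diagonal. First I would record the elementary observation made just before the statement: for $P:=G\times G$ acting on $G$ via $(s,t)(g)=sgt^{-1}$, the restriction of this action along the diagonal embedding $G\cong\Delta=\{(g,g):g\in G\}\hookrightarrow P$ is precisely the conjugation action, since $(g,g)(x)=gxg^{-1}=\pi_c(g,x)$. Thus the $G$-space $G_c$ is exactly the $P$-space $G$ of Fact~\ref{p:diag} with its acting group restricted to $\Delta\cong G$.

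Next I would invoke Fact~\ref{p:diag}: the Roelcke map $\rho\colon G\to\rho(G)$ is a proper $P$-compactification of the $P$-space $G$. Restriction of a continuous action to a subgroup is again continuous, so the induced $P$-action on $\rho(G)$ restricts to a continuous $\Delta$-action, and $\rho$ is still a dense topological embedding. Hence $\rho\colon G_c\to\rho(G)$ is a proper $G$-compactification for the conjugation action, which is assertion~(1).

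For~(2), once $G_c$ is known to be $G$-compactifiable, its greatest $G$-compactification $\beta_G(G_c)$ exists (the maximal proper $G$-compactification, as for $\beta_G G$), and by the universal property of $\beta_G(G_c)$ every $G$-compactification of $G_c$ --- in particular $\rho\colon G_c\to\rho(G)$ --- is a $G$-factor of it, i.e.\ there is a surjective $G$-morphism $\beta_G(G_c)\to\rho(G)$ intertwining the compactification maps. For~(3), a $G$-compactification $\nu\colon X\to Y$ of a $G$-space $X$ corresponds to the closed unital $G$-invariant subalgebra $\nu^*(C(Y))\subseteq\RUC(X)$, with $\beta_G X$ corresponding to all of $\RUC(X)$. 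Applying this with $X=G_c$ and $\nu=\rho$, and using that $\rho^*(C(\rho(G)))$ is by definition the Roelcke algebra $\UC(G)=\RUC(G)\cap\LUC(G)$ (viewed as functions on $G$), we obtain $\UC(G)\subseteq\RUC(G_c)$.

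There is no genuinely hard step here: this is a bookkeeping argument once Fact~\ref{p:diag} is in hand. The only point deserving a line of care is the passage from a proper $P$-compactification to a proper $H$-compactification for a subgroup $H\le P$, namely that equivariance, density and the embedding property all persist under shrinking the acting group, while the restricted action on the compactum stays continuous.
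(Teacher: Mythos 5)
Your proposal is correct and follows exactly the paper's route: the paper derives the corollary from Fact~\ref{p:diag} via the one-line observation that the conjugation action is the restriction of the $P=G\times G$ action to the diagonal $\Delta\cong G$, with items (2) and (3) being the standard consequences (universality of $\beta_G(G_c)$ and the correspondence between $G$-compactifications and subalgebras of $\RUC(G_c)$) that you spell out.
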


\begin{remark} \label{r:semid} 
Let $\a \colon G \times X \to X$ be a continuous action of a topological group $G$ on a topological group $X$ by group automorphisms. Denote by $X \rtimes_{\a} G$ the corresponding topological semidirect product. As usual, identify $G$ with $\{e_X\} \times G$ and $X$ with $X \times \{e_G\}$. Then $X$ is a normal subgroup of $X \rtimes_{\a} G$ and the action $\a$  of $G$ on $X$ is a subaction of the conjugation action of $X \rtimes_{\a} G$ on itself. 
\end{remark}

One of the conclusions of Remark \ref{r:semid} is that Corollary \ref{t:Roelcke}.1 infers back Fact \ref{t:aut}. 

\begin{fact} \label{f:SemidCoset} \cite[Lemma 1.1]{Me-F}
	Let $\alpha \colon G \times X \to X$ be a continuous action by group automorphisms and $P:=X \rtimes_{\a} G$  be the corresponding topological semidirect product. 
	Then the triple $(G,X,\alpha)$ naturally is embedded into the homogeneous action $(P, P/G, \alpha_*)$, where $\alpha_*$ is the natural action of $P$ on $P /G$. 
\end{fact}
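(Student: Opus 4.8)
The plan is to exhibit an explicit embedding and verify it is $P$-equivariant, using the semidirect product structure. First I would recall the setup: $P = X \rtimes_\alpha G$ is the topological semidirect product, with multiplication $(x_1,g_1)(x_2,g_2) = (x_1 \cdot \alpha(g_1)(x_2),\, g_1 g_2)$. Under the identifications $G \cong \{e_X\}\times G$ and $X \cong X \times \{e_G\}$, the subgroup $G$ sits inside $P$ as a (closed) subgroup, so the homogeneous space $P/G$ of left cosets is defined and $P$ acts continuously on it by left translations; call this action $\alpha_*$. I would then define the candidate embedding $j \colon X \to P/G$ by $j(x) = (x, e_G)G$, i.e. sending $x$ to the coset of $(x,e_G)$ in $P/G$.

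Next I would check the three things needed: (i) $j$ is a well-defined topological embedding of $X$ onto a $P$-invariant-free... more precisely onto its image, which is a subset of $P/G$; (ii) the map $G \to \Iso(P/G)$ via $g \mapsto \alpha_*(g,\cdot)$... actually what must be shown is that the triple $(G, X, \alpha)$ embeds into $(P, P/G, \alpha_*)$, meaning there is a topological group embedding $G \hookrightarrow P$ (the canonical one) and a $G$-equivariant topological embedding $X \hookrightarrow P/G$ intertwining $\alpha$ with the restriction of $\alpha_*$ to $G$. For injectivity of $j$: $(x_1,e_G)G = (x_2,e_G)G$ iff $(x_2^{-1}\cdot x_1,\, e_G) \in G = \{e_X\}\times G$, which forces $x_2^{-1} x_1 = e_X$, so $x_1 = x_2$. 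For the topological embedding property I would use that the quotient map $P \to P/G$ restricted to the slice $X \times \{e_G\}$ is a homeomorphism onto its image, which follows because $X$ is a set of coset representatives meeting each coset of the form $(x,e_G)G$ in exactly one point and the quotient topology restricts correctly (here one uses that $X$ is a normal subgroup, so $X \times \{e_G\}$ is a complement-type transversal; concretely the inverse is induced by $(x,g)G \mapsto x$, which is well defined by the injectivity computation above and continuous since it is induced by the continuous projection $P \to X$, $(x,g)\mapsto x$, which is constant on cosets of $G$).

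Then I would verify equivariance. For $g \in G$, identified with $(e_X, g) \in P$, we compute $\alpha_*((e_X,g))\cdot j(x) = (e_X,g)(x,e_G)G = (\alpha(g)(x),\, g)G = (\alpha(g)(x),\, e_G)(e_X,g)G = (\alpha(g)(x), e_G)G = j(\alpha(g)(x))$, using that $(e_X,g) \in G$ so it disappears into the coset. This is exactly the statement that $j$ intertwines the $G$-action $\alpha$ on $X$ with the restriction to $G$ of the left-translation action $\alpha_*$ of $P$ on $P/G$. Continuity of $\alpha_*$ is the standard fact that left translation actions on coset spaces of topological groups are continuous.

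The main obstacle, and the only genuinely non-formal point, is establishing that $j$ is a \emph{topological} embedding rather than merely a continuous injection — i.e. that the subspace topology $P/G$ induces on $j(X)$ agrees with the topology of $X$. Openness of $j$ onto its image is what needs care: one must show that for $U$ open in $X$, the set $j(U) = \{(x,e_G)G : x \in U\}$ is open in the subspace $j(X)$. This follows from the continuity of the retraction $r\colon P/G \to X$ induced by $(x,g) \mapsto x$ (well-defined on cosets because $(x,g)(e_X,h) = (x, gh)$ has the same first coordinate), since then $j(U) = r^{-1}(U) \cap j(X)$. So the crux is checking that $(x,g) \mapsto x$ descends to a continuous map on $P/G$ and that $r \circ j = \mathrm{id}_X$; everything else is bookkeeping with the semidirect product multiplication. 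I would present this retraction argument as the heart of the proof and leave the remaining verifications as routine.
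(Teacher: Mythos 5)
Your proposal is correct and follows essentially the same route as the paper: the same map $j(x)=(x,e_G)G$, the same coset computation $(e_X,g)(x,e_G)G=(\alpha(g)(x),e_G)G$ for equivariance, and the same identification of $G$ and $X$ inside $P$. The only difference is that where the paper cites Roelcke--Dierolf (Prop.\ 6.17(a)) for the fact that $j$ is a homeomorphism onto its image, you prove it directly via the continuous retraction $r\colon P/G\to X$ induced by $(x,g)\mapsto x$ (well defined since $(x,g)(e_X,h)=(x,gh)$), which is a clean self-contained substitute for that reference.
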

\begin{proof}
	The mapping $j \colon X \to P/G, \ j(x)=xG$ is a restriction of the natural projection $P \to P/G$ on $X \subset P$. According to \cite[Prop. 6.17(a)]{RD}, 
	$j$ is a homeomorphism. Moreover, it is straightforward to check that 
	$$(e,g)j(x)=(e_X,g)((x,e_G)G)=(gx,e_G)G=j(gx).$$ So, the restriction of $\alpha_*$ on $G \times X:=G \times j(X)$ is $\alpha$. 
\end{proof} 

Roughly speaking, every action by automorphisms is a subaction of a coset $G$-space. The converse is not true in general. By Example \ref{e:NonAutComp} below the homogeneous action of the full homeomorphism Polish group $\Homeo (S)$ on the sphere $S$ is not a part of any continuous action by group automorphisms. 

\sk 
\subsection*{Which actions are automorphizable?}

For every $G$-space $X$ there exists the \textit{free topological $G$-space} $F_G(X)$ of $X$. 
This concept was  introduced in \cite{Me-F} and has several nice applications;  among others for epimorphism problems.  
Resolving a longstanding important problem by K.
Hofmann, Uspenskij \cite{Usp-epic} has shown that in the category of 
Hausdorff topological groups epimorphisms need not have a dense range. 
Dikranjan and Tholen \cite{DiTh} gave a rather direct proof.
Pestov gave 
a beautiful useful criterion \cite{Pest-epic,Pe-w} 
which is based on the concept of a free topological $G$-group. 
More precisely, the inclusion $i\colon  H
\hookrightarrow G$ of topological groups is an epimorphism if and only if the
free topological $G$-group $F_G(X)$ of the coset $G$-space
$X:=G/H$ is trivial. Triviality means isomorphic to the cyclic discrete group (``as trivial as possible").

In contrast to the case of trivial $G$ (when $F_G(X)$ is just the usual free topological group $F(X)$), the universal $G$-morphism $i \colon X \to F_G(X)$ need not be a topological embedding even for compact $G$-space $X$ as it follows directly from the following 

\begin{ex} \label{e:NonAutComp}  \cite{Me-F}
	Not every compact $G$-space $K$ is a subaction of an action by group automorphisms.  
	For example, the cube $K=[0,1]^n$ or the $n$-dimensional sphere $K=S_n$ ($n \in \N$) with the homeomorphisms group $G=\Homeo(K)$, which is Polish in the compact-open topology.  
\end{ex}

In the case of the circle $K=S$, the corresponding homogeneous action of the Polish group $G=\Homeo(S)$ on $S$ can be identified with the compact coset $G$-space $G/H$, where $H=St(z)$ is the stabilizer for a point $z \in S$. The corresponding free topological $G$-space $F_G(G/H)$ is trivial (by \cite{Me-F}) and Pestov's criterion implies that the closed inclusion $H \hookrightarrow G$ is an epimorphism.

 Moreover, a much stronger result follows by the earlier paper of Uspenskij \cite{Usp-epic}. Namely, in fact, for every compact connected manifold $X$, its Polish homeomorphism group $G=\Homeo(X)$ and a stability subgroup $H=St(z)$, the embedding  $H \hookrightarrow G$ is an epimorphism. Equivalently, any continuous $G$-map from $X$ into any $G$-group is constant. A self-contained elegant explanation of the latter fact can be found in a recent work by Pestov and Uspenskij \cite[page 5]{PU}.

\begin{remark} \label{r:SufficCond} \ 
	\begin{enumerate}
		\item It is well known that such examples are impossible for locally compact $G$ because, in this case, every Tychonoff $G$-space $X$ is even $G$-linearizable on a locally convex linear $G$-space.
		\item  Another sufficient condition is (uniform) $\U$-equicontinuity of the action with respect to some compatible uniformity on $X$. One may assume that $\U$ is generated by a system of $G$-invariant pseudometrics $\{\rho_i: i \in I\}$ with $\rho_i \leq 1$. Now observe that the  Arens-Eells embedding defines a $G$-linearization of $X$ into a locally convex $G$-space. 
	\end{enumerate}

For details see \cite{Me-F} and \cite{Me-b}.  
\end{remark}

\sk 
\section{Representations on low complexity Banach spaces}

\subsection{Reflexive representability} 

 According to a classical definition, a continuous bounded function $f \in C_b(X)$  on a $G$-space $X$ is said to be \textit{weakly almost periodic} (WAP)  if the weak closure of the orbit 
 $fG=\{fg:g \in G\}$  is weakly compact in the Banach space $C_b(X)$ (with the sup-norm).  A compact $G$-space $X$ is WAP if every $f \in C(X)$ is WAP.

\begin{fact} \label{f:Ref-WAP} \cite{Me-nz} 
 Every reflexively representable $G$-space is embedded into a WAP compact $G$-space. 
		A compact metric $G$-space $X$ is reflexively representable if and only if $X$ is a WAP $G$-system. 
\end{fact}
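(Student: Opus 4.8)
The plan is to prove the two assertions by exploiting the correspondence, already recorded in Remark \ref{r:repr-of-funct}, between reflexive representability and the algebra $\WAP(X)$, together with Grothendieck's double limit criterion characterizing relatively weakly compact subsets of $C(K)$. For the first assertion, suppose $(h,\a)$ is a proper representation of the $G$-space $X$ on a reflexive Banach space $V$, with $h\colon G \to \Iso_l(V)$ and $\a\colon X \to B_{V^*}$ a weak-star continuous $G$-embedding. Since $V$ is reflexive, the unit ball $B_{V^*}$ is weakly compact, and the $G$-action on it (by Remark \ref{r:daction}) is continuous for the weak topology, which on the bounded set $B_{V^*}$ coincides with the weak-star topology; so $K:=\overline{\a(X)}$ (weak-star closure) is a compact $G$-space containing $\a(X)$ as a dense $G$-subspace. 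First I would check that $K$ is WAP: for each $f\in C(K)$ it suffices, by Grothendieck's criterion, to verify that $fG$ satisfies the double limit condition in $C(K)$; one reduces to the dense family of coordinate functions $f_v$ ($v\in V$) coming from the representation, where the double limit condition on $\{f_vg: g\in G\}$ evaluated at points of $K$ translates exactly into the statement that the bilinear pairing $V\times B_{V^*}\to\R$ is ``iterated-limit continuous,'' which holds precisely because $V$ is reflexive (Grothendieck). A density/uniform-approximation argument then passes WAP-ness from the $f_v$'s to all of $C(K)$. Thus $X$ embeds $G$-equivariantly into the WAP compact $G$-space $K$.

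For the second assertion, fix a compact metric $G$-space $X$ that is WAP, i.e., every $f\in C(X)$ has relatively weakly compact orbit $fG$ in $C(X)$. Here I would invoke the standard Davis--Figiel--Johnson--Pe\l czy\'nski interpolation technique, applied $G$-equivariantly. Concretely: pick a countable subset $\{f_n\}\subset C(X)$ that separates points of $X$ (possible since $X$ is compact metric), with each $\|f_n\|\le 1$; for each $n$ the set $W_n:=\overline{\operatorname{co}}^{\,w}(f_nG\cup(-f_nG))$ is a $G$-invariant, convex, symmetric, weakly compact subset of $C(X)$. Apply the DFJP construction to $W_n$ to obtain a reflexive Banach space $V_n$ together with a bounded linear $G$-equivariant injection $j_n\colon V_n\hookrightarrow C(X)$ whose image contains $W_n$ (the $G$-equivariance is inherited because $W_n$ is $G$-invariant and the DFJP norms are built canonically from $W_n$). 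Set $V:=\big(\bigoplus_n V_n\big)_{\ell_2}$, a reflexive space carrying the diagonal $G$-action by isometries. The functions $f_n$ lie in the image of $V$, so the associated maps produce a weak-star continuous $G$-map $\a\colon X\to B_{V^*}$ (after rescaling), $x\mapsto(\text{evaluation at }x)$; since the $f_n$ separate points of the compact space $X$, $\a$ is injective, hence — $X$ being compact — a topological embedding. This gives a proper reflexive representation of $X$.

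The converse direction of the second assertion (reflexively representable $\Rightarrow$ WAP) is immediate from the first assertion restricted to the metric case: a proper reflexive representation embeds $X$ into a WAP compact $G$-space, and a closed $G$-subspace of a WAP system is WAP (restriction of functions preserves relative weak compactness of orbits). So the only genuine content beyond general nonsense is the forward direction WAP $\Rightarrow$ reflexively representable in the metric case.

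The main obstacle I anticipate is making the DFJP interpolation \emph{$G$-equivariant} and checking that the resulting isometric $G$-action on $V_n$ (and hence on $V$) is \emph{continuous} in the strong operator topology, rather than merely that each $g$ acts isometrically. Continuity should follow from the continuity of the $G$-action on $C(X)$ restricted to the weakly compact set $W_n$ together with the fact that the DFJP norm of $V_n$ is determined by the Minkowski-gauge data of the sets $2^k W_n + 2^{-k} B_{C(X)}$, all of which are $G$-invariant; but verifying the strong-operator continuity — equivalently, that $g\mapsto gv$ is norm-continuous for each $v\in V_n$ — will require a careful estimate relating the $V_n$-norm to the $C(X)$-norm on the dense subspace $\bigcup_k 2^k W_n$, and this is the step where the compact-metric hypothesis (giving a single sequence $\{f_n\}$ and hence a countable, second-countable situation) is really used. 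Everything else is routine bookkeeping with Grothendieck's double limit criterion and the definition of proper $G$-representation.
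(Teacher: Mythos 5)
The paper gives no proof of Fact~\ref{f:Ref-WAP}: it is quoted verbatim from \cite{Me-nz}, and your sketch reconstructs essentially the argument of that source --- matrix coefficients of reflexive representations are WAP via Grothendieck's double limit criterion applied to the pairing $V\times B_{V^*}$, and the converse for compact metric $X$ via a $G$-equivariant Davis--Figiel--Johnson--Pe\l czy\'nski factorization of the weakly compact orbit hulls. Two points you leave implicit are exactly where the standard supplementary lemmas must enter: the Stone--Weierstrass step requires that $\WAP(K)$ is a norm-closed \emph{subalgebra} of $C(K)$ (closure under products is classical but not automatic from the double limit condition alone), and the strong-operator continuity of the induced action on the DFJP space $V_n$ --- which you correctly flag as the crux --- follows from the split estimate
$\|gy-y\|_{V_n}^2\le\sum_{k\le N}4^{k}\|gy-y\|_{C(X)}^2+\sum_{k>N}O(4^{-k})$
for $y\in W_n$, using $gy-y\in 2W_n$ (convexity, symmetry and $G$-invariance of $W_n$) to control the tail uniformly in $g$ and the norm-continuity of $g\mapsto gy$ in $C(X)$ for the head, after which one passes to the closed linear span of $W_n$ inside $V_n$. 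With these two standard facts supplied, your proposal is a correct rendering of the cited proof.
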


The following result, based on Grothendieck's criterion, can be derived by combining \cite[Theorem 4.6]{Me-nz} and \cite[Fact 2.4]{Me-nz}

\begin{fact} \label{f:WAP} \cite{Me-nz}
	Let $X$ be a (not necessarily compact) $G$-space and $f \in \RUC(X)$. The following conditions are equivalent: 
	\begin{enumerate}
		\item $f \in \WAP(X)$. 
		\item $f$ has Grothendieck's double limit property. 
	\item $f$ is reflexively representable (Remark \ref{r:repr-of-funct}). 
	\item $f$ comes from a $G$-compactification $\nu \colon X \to Y$ of $X$ such that $Y$ is WAP. 
	\end{enumerate} 
\end{fact}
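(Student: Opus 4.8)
\medskip
\noindent\textbf{Proof strategy.} The plan is to establish the cycle $(3)\Rightarrow(1)\Leftrightarrow(2)\Rightarrow(4)\Rightarrow(3)$, preceded by one reduction that moves the whole discussion into a $C(K)$-space so that Grothendieck's double limit criterion applies verbatim. For the reduction: since $f\in\RUC(X)$ and, for a fixed $g\in G$, conjugation $h\mapsto g\obr hg$ is a topological automorphism of $G$, every translate $fg$ again lies in the closed $G$-invariant subalgebra $\RUC(X)\sbs C_b(X)$; hence the smallest closed unital $G$-invariant subalgebra $\mathcal A_f\sbs\RUC(X)$ containing the orbit $fG$ is well defined, and its Gelfand space yields a $G$-compactification $\nu\colon X\to X_f$ with $f=\tilde f\circ\nu$ for a unique $\tilde f\in C(X_f)$. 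The induced isometric $G$-embedding $\nu^*\colon C(X_f)\hookrightarrow C_b(X)$ identifies $C(X_f)$ with $\mathcal A_f\supseteq fG$, so relative weak compactness of the bounded set $fG$ may be computed in either space, and $\nu(X)$ is dense in $X_f$. With this in hand, $(3)\Rightarrow(1)$ is immediate: if $(h,\a)$ represents $X$ on a reflexive $V$ and $f=f_v$, then $fG$ is the image of the bounded orbit $h(G)v\sbs V$ under the bounded (hence weak-to-weak continuous) linear operator $T\colon V\to C_b(X)$, $Tw=f_w$; as bounded sets in a reflexive space are relatively weakly compact, $\overline{fG}^{\,w}$ is weakly compact, i.e.\ $f\in\WAP(X)$.

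The equivalence $(1)\Leftrightarrow(2)$ is Grothendieck's double limit criterion transported to $X_f$. By the reduction, $f\in\WAP(X)$ iff $\overline{\tilde fG}^{\,w}$ is weakly compact in $C(X_f)$, which by Grothendieck's criterion holds iff for all sequences $(g_n)$ in $G$ and all sequences of points of $X_f$ the two iterated limits of the values of $\tilde f(g_n\,\cdot\,)$ coincide whenever both exist. Using density of $\nu(X)$ in $X_f$ one checks that it suffices to test this on sequences drawn from $\nu(X)$, which is exactly the statement that $f$ has Grothendieck's double limit property on $X$.

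For $(2)\Rightarrow(4)$ take $Y:=X_f$ and show it is a WAP system. By $(1)\Leftrightarrow(2)$ applied on $X_f$, the function $\tilde f$, hence each translate $\tilde fg$, is WAP. The WAP functions on any $G$-space form a closed $G$-invariant subalgebra of $C_b$: stability under linear combinations is clear since sums of weakly compact sets are weakly compact, stability under products follows from the double limit criterion via a diagonal-subsequence argument, and stability under uniform limits is a routine $\eps$-estimate. Since $C(X_f)=\mathcal A_f$ is generated as a closed unital $G$-invariant subalgebra by $\tilde fG$, every element of $C(X_f)$ is WAP, so $X_f$ is a WAP compact $G$-system from which $f$ arises; this gives $(4)$. (Conversely $(4)\Rightarrow(1)$ is one line: if $f=\phi\circ\nu$ with $Y$ WAP then $\phi\in C(Y)$ is WAP and $\overline{fG}^{\,w}=\nu^*(\overline{\phi G}^{\,w})$ is weakly compact.)

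It remains to prove $(4)\Rightarrow(3)$, which carries the main load. Here $\phi\in C(Y)$ is WAP, so $E:=\overline{\phi G}^{\,w}$ is weakly compact and $G$-invariant, whence $W:=\overline{\operatorname{co}}^{\,w}(E\cup(-E))$ is weakly compact and $G$-invariant by the Krein--Smulian theorem. Applying, $G$-equivariantly, the interpolation theorem of Davis, Figiel, Johnson and Pe\l{}czy\'{n}ski to $W\sbs C(Y)$ produces a reflexive Banach space $V$ with a strongly continuous isometric $G$-action and a $G$-equivariant bounded linear map $j\colon V\to C(Y)$ with $W\sbs j(B_V)$; picking $v\in B_V$ with $j(v)=\phi$ and composing the resulting $G$-representation of $Y$ with $\nu$ exhibits $f=\phi\circ\nu$ as the matrix coefficient $f_v$ of a representation of $X$ on the reflexive space $V$, i.e.\ $f$ is reflexively representable in the sense of Remark \ref{r:repr-of-funct}. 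The hard part is precisely this last step --- verifying that the interpolation space inherits a linear, isometric, strongly continuous $G$-action that is intertwined by $j$ with the $G$-action on $C(Y)$ (this is the representation-theoretic half of the cited results of \cite{Me-nz}) --- together with the one genuinely non-formal point in $(2)\Rightarrow(4)$, the product-stability of WAP functions; everything else is bookkeeping with uniform structures and Gelfand duality.
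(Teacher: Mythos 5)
Your argument is correct, and it reconstructs exactly the machinery behind the citation: the paper itself offers no proof of this Fact, merely noting that it follows by combining Theorem 4.6 and Fact 2.4 of \cite{Me-nz}, and your route --- the cyclic $G$-invariant subalgebra $\mathcal A_f\subset\RUC(X)$ and its Gelfand $G$-compactification $X_f$, Grothendieck's double limit criterion transported to $C(X_f)$ via density of $\nu(X)$, and the $G$-equivariant Davis--Figiel--Johnson--Pe\l{}czy\'{n}ski factorization of the weakly compact orbit hull for $(4)\Rightarrow(3)$ --- is precisely the standard path taken in that reference. The two points you defer (strong continuity of the induced isometric action on the interpolation space, and product-stability of WAP functions) are indeed the genuine content of the cited results rather than gaps in your reasoning.
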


\sk 
\begin{thm} \label{t:SL(2)} 
Let $G:=\SL_n(\R)$, where $n \geq 2$. 
Then the conjugation $G$-space $G_c$ is not reflexively representable. 
\end{thm}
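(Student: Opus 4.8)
The strategy is to exhibit a single function $f \in \RUC(G_c)$ which fails Grothendieck's double limit property; by Fact \ref{f:WAP} this shows $f \notin \WAP(G_c)$, and since by Fact \ref{f:Ref-WAP} a reflexively representable $G$-space must embed into a WAP compact $G$-space (whence every function in its canonical algebra would be WAP), this rules out reflexive representability of $G_c$. So everything reduces to finding one bad matrix-coefficient-type function on the conjugation action of $\SL_n(\R)$.

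First I would reduce to $n=2$. If $G_c$ were reflexively representable for $G=\SL_n(\R)$ with $n\ge 2$, then restricting to the block-embedded copy of $\SL_2(\R)$ and noting that the conjugation action of $\SL_2(\R)$ on its own copy (sitting inside $G$ as the upper-left $2\times 2$ block with an identity tail) is a subaction, one would get reflexive representability of $(\SL_2(\R))_c$; here one uses that a subaction of a reflexively representable action is reflexively representable (restriction of the representation). Actually it is cleaner to work directly with $G=\SL_2(\R)$ and then remark that the general case follows because $\SL_2(\R)$ sits as a retract-like subgroup whose conjugation action embeds $G$-equivariantly; I would spell out this reduction carefully as the first step.

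Next, for $G=\SL_2(\R)$, I would pick the function $f(x) := \varphi(\operatorname{tr}(x))$ for a suitable bounded $\varphi$, or more robustly a coordinate function of $x\in\SL_2(\R)$ composed with something bounded — e.g. $f(x) = \operatorname{arctan}$ of the $(1,1)$-entry, or a function detecting the size of a fixed entry. The point is to produce, using the hyperbolic element $a=\operatorname{diag}(\lambda,\lambda^{-1})$ and a unipotent $u=\begin{pmatrix}1&1\\0&1\end{pmatrix}$, two sequences $(g_m)$ in $G$ and points $(x_k)$ in $G_c$ such that the iterated limits $\lim_m\lim_k f(g_m x_k g_m^{-1})$ and $\lim_k\lim_m f(g_m x_k g_m^{-1})$ both exist but differ. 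Conjugation by $a^m$ stretches off-diagonal entries by $\lambda^{\pm 2m}$, so conjugates of a fixed unipotent run off to infinity in one entry while staying bounded in another; choosing $f$ to be, say, a bounded function that is $1$ near a point $p$ and $0$ near another point $q$, and arranging $g_m x_k g_m^{-1}$ to approach $p$ along one order of limits and $q$ along the other, produces the double-limit failure. I would need to check $f \in \RUC(G_c)$: this is where I'd use that $f$ is left uniformly continuous on $G$ combined with boundedness of the conjugation action's right uniformity (Fact \ref{t:aut}), or simply pick $f$ in the Roelcke algebra $\UC(G)$, which by Corollary \ref{t:Roelcke}(3) lies in $\RUC(G_c)$.

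The main obstacle I anticipate is the explicit construction of the two sequences realizing the double-limit discrepancy: one must choose the conjugating elements $g_m$ (combining hyperbolic stretching with some translation) and the conjugated points $x_k$ so that \emph{both} iterated limits converge, not merely one, and converge to distinct values. The unbounded orbit geometry of conjugation on $\SL_2(\R)$ — conjugates of a unipotent leaving every compact set — makes failure of weak compactness plausible, but packaging it into Grothendieck's precise combinatorial condition requires a delicate interchange-of-limits argument, likely using that $\SL_2(\R)$ acts on the projective line / the Bruhat decomposition so that conjugation orbits have well-understood asymptotics. I would anticipate the author uses a concrete $2\times 2$ computation with diagonal and unipotent generators, which is exactly what "uses Grothendieck's double limit criterion" in the introduction signals.
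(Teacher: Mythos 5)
Your proposal is correct and follows essentially the same route as the paper: the author also assumes a WAP compactification exists, passes to the one-point compactification, takes a compactly supported bump function $f$ with $f(e)=1$, and violates Grothendieck's double limit criterion with exactly the computation you anticipate, namely $g_n=\operatorname{diag}(n^{-1},n)$ conjugating the unipotents $x_m=\left(\begin{smallmatrix}1&m\\0&1\end{smallmatrix}\right)$ so that the two iterated limits are $\infty$ and the identity. The only cosmetic difference is that the paper handles general $n\geq 2$ by the analogous block computation rather than by your subaction reduction to $n=2$.
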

\begin{proof} Assume, to the contrary, that the $G$-space $G_c$ is reflexively representable. Then by Fact \ref{f:Ref-WAP} there exists a proper $G$-compactification $G_c \hookrightarrow X$ such that the $G$-space $X$ is WAP. Then every $G$-factor of $X$ is again WAP. In particular, Alexandrov's 1-point compactification (smallest \textit{proper} $G$-compactification of $G_c$) $Y:=G_c \cup \{\infty\}$ is WAP. 
	
	We claim that $Y$ is not WAP. 
	It is enough to show that for every  compact \nbd \ $U$ of the identity $e \in G$ and for every continuous bounded function $f \colon G \to \R$ with $f(e)=1$ and $f(x)=0$ for every $x \notin U$, we have $f \notin \WAP(G_c)$. 
	By the Grothendieck double limit criterion for $G$-spaces (as in Fact \ref{f:WAP}), it suffices to show that there exist two sequences $g_n \in G$ and $x_m \in G_c$ such that the double sequence $f(g_nx_mg_n^{-1})$ ($n,m \in \N$) has distinct double limits. 
	
	Now for $n=2$, define the following sequences (for general $n \geq 2$ the proof is similar):    
	$$g_n:= \left(\begin{array}{cc}
	n^{-1} & 0  \\
	0 & n 
	\end{array}\right), \ \ x_m:= \left( \begin{array}{cc}
	1 & m  \\
	0 & 1  
	\end{array}\right).$$
		Then 
	$$g_nx_mg_n^{-1}=\left(\begin{array}{cc}
	1 & \frac{m}{n^2}  \\
	0 & 1  
	\end{array}\right)$$ 
	$$ \lim_n \lim_m (g_n x_m g_n^{-1})=\infty \ \ \ \neq \ \ \
	\lim_m \lim_n (g_nx_m g_n^{-1})= 
	\left( \begin{array}{cc}
	1 & 0  \\
	0 & 1  
	\end{array}\right).$$
	Hence, 
	$$ \lim_n \lim_m f(g_n x_m g_n^{-1})=0 \ \ \ \neq \ \ \ 1=
	\lim_m \lim_n f(g_nx_m g_n^{-1}).$$ 
	\end{proof}

\begin{prop} \label{t:n=1} 
	Let $\R^{\times}$ be the multiplicative group of all 
	nonzero reals. Then 
	\begin{enumerate}
		\item 	the natural action $\a \colon \R^{\times} \times \R \to \R$ is not reflexively representable; 
		\item for the "$ax+b$-group" $G:=\R \rtimes_{\a} \R^{\times}$ the $G$-space $G_c$ is not reflexively representable;  
		\item let $X=\R \cup \{\infty\}$ be the 1-point compactification of $\R$. Then the enveloping semigroup $E(X)$ of the action of $\R^{\times}$ on $X$ is the semigroup $\R^{\times} \cup \{0\} \cup \{\infty\}$, with $0 \cdot \infty=\infty$ and $\infty \cdot 0=0$ (other cases are understood). 
	\end{enumerate}
\end{prop}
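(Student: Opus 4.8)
The plan is to prove (1) by the Grothendieck double-limit argument already used for Theorem~\ref{t:SL(2)}, to deduce (2) from (1) via the semidirect-product embedding of Remark~\ref{r:semid}, and to compute the enveloping semigroup in (3) by hand. For (1): the $\R^{\times}$-space $\R$ is locally compact, so by de~Vries' theorem the multiplication action extends continuously to the Alexandrov compactification $X=\R\cup\{\infty\}$, which is then the smallest proper $\R^{\times}$-compactification of $\R$ and a $G$-factor of every proper one. If $\R$ were reflexively representable, then by Fact~\ref{f:Ref-WAP} it would embed into a WAP compact $\R^{\times}$-space; passing to the closure of its image (closed $G$-subspaces of WAP systems are WAP) and then to the factor $X$ (factors of WAP systems are WAP) we would conclude that $X$ is WAP, hence that every $f\in C(X)=\RUC(X)$ has Grothendieck's double-limit property by Fact~\ref{f:WAP}. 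To get a contradiction, take $f\in C(X)$ with $f(0)=1$, $f(\infty)=0$, and set $a_{n}=1/n\in\R^{\times}$, $x_{m}=m\in\R$; then $a_{n}x_{m}=m/n$ and
\[
\lim_{n}\lim_{m} f(a_{n}x_{m})=f(\infty)=0\ \neq\ 1=f(0)=\lim_{m}\lim_{n} f(a_{n}x_{m}),
\]
so $f$ fails the double-limit property.

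\emph{Part (2).} Apply Remark~\ref{r:semid} with the roles ``$G$'' $=\R^{\times}$ and ``$X$'' $=\R$: then $X\rtimes_{\a}G$ is exactly the $ax+b$-group $G=\R\rtimes_{\a}\R^{\times}$, and the remark provides an $\R^{\times}$-equivariant topological embedding $\R\hookrightarrow G_{c}$, where $\R^{\times}$ is viewed as a subgroup of $G$ and acts on $\R$ by $\a$. Since reflexive representability of a $G$-space is inherited by its $G$-subspaces and is preserved when the acting group is restricted to a subgroup (both immediate from Definition~\ref{d:repr}), a proper $G$-representation of $G_{c}$ on a reflexive space would restrict to a proper $\R^{\times}$-representation of $\R$ on a reflexive space, contradicting (1).

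\emph{Part (3).} Write $\tilde a\in X^{X}$ for the extension of multiplication by $a\in\R^{\times}$, with $\tilde a(\infty)=\infty$; then $E(X)$ is the pointwise closure of $\{\tilde a:a\in\R^{\times}\}$ in the compact Hausdorff space $X^{X}$. If $\tilde{a}_{i}\to p$ pointwise, pass to a subnet along which $a_{i}\to c$ in the compact space $X$. If $c\in\R^{\times}$, joint continuity of multiplication gives $p=\tilde c$. If $c=0$, then $a_{i}x\to 0$ for every $x\in\R$ while $a_{i}\cdot\infty=\infty$, so $p=p_{0}$, the map with $p_{0}(\R)=\{0\}$ and $p_{0}(\infty)=\infty$. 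If $c=\infty$, then $a_{i}x\to\infty$ for every $x\in\R\setminus\{0\}$ while $a_{i}\cdot 0=0$, so $p=p_{\infty}$, the map fixing $0$ and $\infty$ and sending $\R\setminus\{0\}$ to $\infty$. Uniqueness of pointwise limits forces $p\in\{\tilde a:a\in\R^{\times}\}\cup\{p_{0},p_{\infty}\}$, and conversely $p_{0}=\lim_{n}\widetilde{1/n}$, $p_{\infty}=\lim_{n}\tilde n$. Thus $E(X)=\{\tilde a:a\in\R^{\times}\}\cup\{p_{0},p_{\infty}\}$, which we label $\R^{\times}\cup\{0\}\cup\{\infty\}$; composing maps, $\tilde a\circ\tilde b=\widetilde{ab}$, the elements $p_{0},p_{\infty}$ absorb each $\tilde a$ on both sides, $p_{0}\circ p_{0}=p_{0}$, $p_{\infty}\circ p_{\infty}=p_{\infty}$, and the only non-obvious products are $p_{0}\circ p_{\infty}=p_{\infty}$ and $p_{\infty}\circ p_{0}=p_{0}$, i.e. $0\cdot\infty=\infty$ and $\infty\cdot 0=0$.

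The one genuinely error-prone point is the bookkeeping in (3): both $0$ and $\infty$ are fixed by the $\R^{\times}$-action on $X$, yet they behave oppositely under the two degenerate limits ($p_{0}$ collapses the non-closed set $\R$ to $0$ while fixing $\infty$; $p_{\infty}$ does the reverse around $0$), and it is precisely this asymmetry that produces $0\cdot\infty\neq\infty\cdot 0$ and also makes $p_{0}$ discontinuous, which via the classical characterization of WAP compact systems through continuity of the enveloping semigroup gives an alternative route to (1).
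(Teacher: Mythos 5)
Your proof is correct and follows essentially the same route as the paper: the identical sequences $g_n=n^{-1}$, $x_m=m$ and the Grothendieck double-limit criterion for (1), the subaction-of-the-conjugation-action argument via Remark~\ref{r:semid} for (2), and a direct computation for (3). The only difference is that you spell out in full the steps the paper compresses (``the rest is similar to Theorem~\ref{t:SL(2)}'' and ``straightforward''), and your bookkeeping for the enveloping semigroup, including $p_0\circ p_\infty=p_\infty$ and $p_\infty\circ p_0=p_0$, checks out.
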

\begin{proof}
	(1) Choose $g_n:=n^{-1}, x_m:=m$, where $n,m \in \N$. Then 
	$$ \lim_n \lim_m (g_n x_m)=\lim_n \lim_m \frac{m}{n}=\infty \ \ \ \neq \ \ \
	0= \lim_m \lim_n (g_n x_m)=\lim_m \lim_n \frac{m}{n}.$$
	The rest is similar to the proof of Theorem \ref{t:SL(2)}. 
	
	(2) This follows from (1) because $\a$ is a subaction of the action $G \times G_c \to G_c$ by Fact \ref{f:SemidCoset}. 
	
	(3) Straightforward. 
	\end{proof}

\sk  
\subsection{Asplund representability}

First recall the classical concept of non-sensitivity. 
An action of $G$ on a uniform space $(X,\U)$ is said to be \textit{non-sensitive} 
if for every entourage $\eps \in \U$ there exists a nonempty open subset $O$ in $X$ such that $gO$ remains $\eps$-small for every $g \in G$. 

According to \cite{GM-HNS}, 
\textit{hereditarily non-sensitive} (in short, HNS) means that every (equivalently, every closed) $G$-subspace $Y$ of $X$ is non-sensitive with respect to the induced subspace uniformity.

\begin{fact} \label{l:fromHNS} \
	\begin{enumerate}
		\item \cite{GM-HNS} Every Asplund representable $G$-space is embedded into a HNS compact $G$-space. 
	 A compact metric $G$-space $X$ is 
		Asplund representable if and only if $X$ is HNS. 
		\item 
		\cite{GMU}   
		A compact metric $G$-space $X$ is HNS if and only if the enveloping semigroup $E(X)$ is metrizable. 
	\end{enumerate}
\end{fact}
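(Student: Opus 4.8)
The plan is to prove the two assertions by different routes and then record the common conclusion that the sequel exploits.

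\emph{Part (1): Asplund representability is equivalent to HNS for compact metrizable $X$.} The bridge is \emph{fragmentability}: a Banach space $V$ is Asplund precisely when every bounded subset of $V^{*}$ is $(\mathrm{weak}^{*},\mathrm{norm})$-fragmented (Namioka--Phelps, Stegall). Given a proper representation $(h,\a)$ of $X$ on an Asplund $V$ with $\a(X)\subseteq B_{V^{*}}$, the norm of $V^{*}$ restricts to a bounded metric on $X$ which is $G$-invariant (as $G$ acts by linear isometries) and which fragments the weak$^{*}$ topology; then, for any closed $G$-subspace $Y$ and any $\e\in\U$, fragmentability with respect to the norm yields a nonempty relatively open $O\subseteq Y$ whose translates $gO$ are all $\e$-small, by $G$-invariance of the norm metric, so $Y$ is non-sensitive. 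Since a $G$-subspace of an Asplund-representable system is again Asplund-representable (restrict $\a$), this gives HNS. The substantial direction is the converse for \emph{metrizable} compact $X$, where HNS must be converted into an honest Asplund space: I would use that metrizability lets one replace ``non-sensitivity of every closed subsystem'' by a single compatible metric on $X$ that fragments $X$, pass to a countable separating subfamily of $C(X)$ adapted to that metric, and realize the generated closed $G$-invariant subalgebra on a $c_{0}$-type Asplund space by the construction of \cite{GM-HNS}. This last step is the main obstacle of part (1).

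\emph{Part (2): HNS is equivalent to metrizability of $E(X)$ for compact metrizable $X$.} For the forward direction I would use that, $X$ being HNS, every $p\in E(X)$ is a \emph{fragmented} self-map of $X$ (see \cite{GM-HNS}): each $g$-translation is a homeomorphism, and a Namioka-type argument shows that a pointwise limit of fragmented maps into a separable metric space is again fragmented. Fixing a countable dense $D\subseteq X$, a fragmented map has a dense set of points of continuity on every nonempty closed subset, and from this one deduces that two fragmented maps $X\to X$ agreeing on $D$ coincide; hence the restriction map $E(X)\to X^{D}$ is a continuous injection of the compact space $E(X)$ into the metrizable $X^{D}$, so it is an embedding and $E(X)$ is metrizable. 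Conversely, suppose $E(X)$ is metrizable, hence second countable. For a closed $G$-subspace $Y$ the restriction homomorphism $E(X)\to E(Y)$ is a continuous surjection, so $E(Y)$ is second countable too; it therefore suffices to show that a \emph{sensitive} compact metric $G$-system $Y$ has a non-metrizable enveloping semigroup. If $Y$ is $\e$-sensitive, one builds a Cantor scheme of nonempty open sets, using at each splitting a group element that $\e$-separates two points, and extracts an uncountable subset of $E(Y)\subseteq Y^{Y}$ which is discrete in the pointwise topology; a second countable space has no uncountable discrete subspace, a contradiction. The delicate points here are the ``fragmented maps are determined on a dense subset'' lemma and the bookkeeping in the Cantor-scheme construction, both carried out in \cite{GMU}.

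Combining the two parts gives the principle used repeatedly below: for a metrizable compact $G$-system, Asplund representability, hereditary non-sensitivity, and metrizability of the enveloping semigroup are one and the same, so that \emph{non-}metrizability of $E(X)$ is exactly the obstruction to Asplund representability that is exploited in Theorem~\ref{t:SL} and Proposition~\ref{t:GL}.
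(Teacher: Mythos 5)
The paper does not prove this statement at all: it is imported as a ``Fact'' with citations to \cite{GM-HNS} and \cite{GMU}, so there is no internal argument to compare yours against; what you have written is a reconstruction of those two papers. Your easy direction of (1) is correct and is indeed the standard argument: Asplundness of $V$ is equivalent to $(\mathrm{weak}^*,\mathrm{norm})$-fragmentability of bounded subsets of $V^*$ (Namioka--Phelps, Stegall), the dual action of $\Iso_l(V)$ is by isometries of $V^*$, so the norm metric on $\a(X)\subseteq B_{V^*}$ is $G$-invariant and fragments the weak$^*$ topology, which yields non-sensitivity of every $G$-subspace. The two genuinely hard implications (HNS $\Rightarrow$ Asplund representable, and metrizable $E(X)\Rightarrow$ HNS) you defer to the references in all but name, which is legitimate for a cited fact but means you have not actually proved them.

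There is, however, one step that is not merely deferred but wrong as stated: the claim that ``two fragmented maps $X\to X$ agreeing on a dense set coincide.'' Take $X=[0,1]$, $p=\mathrm{id}$, and $q=\mathrm{id}$ except $q(1/2)=0$. Both maps are fragmented (each has a dense set of continuity points in every nonempty closed subset), they agree on the dense set $[0,1]\setminus\{1/2\}$, and they differ. The underlying problem is that density of $D$ in $X$ gives no control of $D\cap A$ for a closed subset $A$, so ``dense continuity points'' cannot be intersected with $D$. Consequently your proof that the restriction $E(X)\to X^D$ is injective collapses; any correct argument must use more than fragmentedness of the individual elements of $E(X)$. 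The clean route to ``HNS $\Rightarrow E(X)$ metrizable'' in the literature actually passes through part (1): a compact metric HNS system admits a proper representation on a \emph{separable} Asplund space $V$, so $V^*$ is also separable, the closure of $h(G)$ in the unit ball of operators on $V$ with the weak operator topology is compact and metrizable (WOT on bounded sets is determined by countably many pairs from dense subsets of $V$ and $V^*$), and $E(X)$ is a continuous Hausdorff image of it, hence metrizable. Your converse direction is also shakier than you suggest: the assertion ``a sensitive compact metric system has an uncountable discrete subspace in its enveloping semigroup'' is exactly (a strengthening of) the contrapositive of what is to be proved, and the Cantor-scheme sketch does not explain how the chosen group elements produce pairwise pointwise-separated limit points in $E(Y)$; this is the substantial content of \cite{GMU} and cannot be waved through.
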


Every expansive action $G \times X \to X$ on a uniform space $(X,\U)$ without an isolated point is sensitive. \textit{Expansiveness} means that there exists $\eps \in \U$ such that for every distinct pair of point $x \neq y$ in $X$ we have $(gx,gy) \neq \eps$ for some $g \in G$. Many compact groups $K$ admit expansive automorphisms $\sigma \colon K \to K$. For instance, the cascade induced by a  hyperbolic toral automorphism is expansive (also topologically mixing), hence not Asplund representable. In fact, it is not even Rosenthal representable (see Theorem \ref{t:Lebed}).

\begin{example} \label{e:w-m} 
	The action  $\R^{\times} \times \R \to \R$ from Proposition \ref{t:n=1}.1 is Asplund representable by  Fact \ref{l:fromHNS}.2. Indeed, the enveloping semigroup of the 1-point compactification action is metrizable (see  Proposition \ref{t:n=1}.3). 
\end{example}

\sk 
Recall that an action of $G$ on $X$ is said to be:

(a) \textit{(algebraically) transitive} if for every $x,y \in X$ there exists $g \in G$ such that $gx=y$. 

(b) \textit{2-transitive} if 
for all ordered pairs $(x_1, x_2)$ and $(y_1, y_2)$ in $X$ with $x_1 \neq y_1$ and $x_2 \neq y_2$, there exists $g \in G$ such that $x_2 = gx_1, y_2= gy_1$.

(c) \textit{topologically transitive} if, for every pair of nonempty open subsets
$U$ and $V$ of $X$, there is an element $g\in G$ such that $gU \cap V$ is nonempty.

(d) \textit{weakly mixing} 
if the induced diagonal action of $G$ on $X \times X$ is topologically transitive. 

\begin{lemma} \label{l:2tr-wm} 
	Let $G \times X \to X$ be a 2-transitive action such that $X$ has no isolated point. Then this action is weakly mixing.
\end{lemma}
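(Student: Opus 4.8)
The plan is to show that the diagonal action of $G$ on $X \times X$ is topologically transitive, i.e., that for any two nonempty open sets $W_1, W_2 \subset X \times X$ there is $g \in G$ with $g(W_1) \cap W_2 \neq \emptyset$. First I would shrink: since products of open sets form a base for the product topology, it suffices to treat $W_1 = U_1 \times U_2$ and $W_2 = V_1 \times V_2$ with all four factors nonempty open in $X$. What we must produce is a single $g \in G$ and a point $(x_1, x_2) \in U_1 \times U_2$ with $(gx_1, gx_2) \in V_1 \times V_2$.

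The key step is to exploit the no-isolated-point hypothesis to arrange that the two points we feed in are \emph{distinct} and that their targets are also distinct, so that 2-transitivity applies. Concretely: pick any $x_1 \in U_1$; since $U_1$ is open with no isolated point we may instead pick two distinct points, but the cleaner route is to first choose the would-be images. Pick distinct points $y_1 \in V_1$ and $y_2 \in V_2$ — this is where no isolated point is used, in the case $V_1 \cap V_2 \neq \emptyset$: if $V_1$ and $V_2$ intersect, their intersection is open and nonempty, hence infinite (no isolated point), so we can still choose $y_1 \neq y_2$ inside $V_1 \times V_2$; if they are disjoint any choice works. Symmetrically choose distinct $x_1 \in U_1$, $x_2 \in U_2$ with $x_1 \neq x_2$ by the same argument applied to $U_1, U_2$. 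Now $(x_1, x_2)$ and $(y_1, y_2)$ are ordered pairs with $x_1 \neq x_2$ and $y_1 \neq y_2$ — wait, that is not quite the hypothesis of 2-transitivity as stated; 2-transitivity requires $x_1 \neq y_1$ and $x_2 \neq y_2$ for the two pairs $(x_1,x_2)$ and $(y_1,y_2)$. So I would instead be careful: the definition of 2-transitivity in the paper asks, given $(x_1,x_2)$ and $(y_1,y_2)$ with $x_1 \neq y_1$ and $x_2 \neq y_2$, for $g$ with $gx_1 = y_1$, $gx_2 = y_2$. Thus I want to choose $x_1 \in U_1, y_1 \in V_1, x_2 \in U_2, y_2 \in V_2$ with $x_1 \neq y_1$ \emph{and} $x_2 \neq y_2$, and then 2-transitivity directly gives the required $g$.

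Therefore the real content is: can we always pick such points? If $U_1 \neq V_1$ as sets it is immediate we can pick $x_1 \neq y_1$; if $U_1 = V_1$ (or merely $U_1 \cap V_1 \neq \emptyset$, which is the only obstruction) we use that $U_1 \cap V_1$ is a nonempty open set, hence infinite since $X$ has no isolated point, so we can still choose $x_1 \in U_1$ and $y_1 \in V_1$ with $x_1 \neq y_1$ (e.g.\ both from $U_1 \cap V_1$). The same for the second coordinate with $U_2, V_2$. The two coordinates are handled independently, so there is no interaction constraint. Having fixed $x_1 \neq y_1$ and $x_2 \neq y_2$, apply 2-transitivity to get $g \in G$ with $gx_1 = y_1, gx_2 = y_2$; then $g(x_1,x_2) = (y_1,y_2) \in V_1 \times V_2$ while $(x_1,x_2) \in U_1 \times U_2$, so $g(U_1 \times U_2) \cap (V_1 \times V_2) \neq \emptyset$. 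This proves topological transitivity of the diagonal action, hence weak mixing.

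The main obstacle — really the only subtlety — is the bookkeeping around the inequality conditions in the definition of 2-transitivity: one must ensure one never asks a point to both lie in a small open set and differ from another forced point, and this is exactly what the no-isolated-point hypothesis rescues us from, since nonempty open subsets of such a space are infinite. Everything else is a routine reduction to basic open sets. I would also remark that Hausdorffness (all spaces are Tychonoff here) is used implicitly so that ``no isolated point'' forces nonempty open sets to be infinite, and that the argument makes no use of topology on $G$ beyond the action being well-defined, matching the purely algebraic flavour of 2-transitivity.
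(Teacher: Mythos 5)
Your overall strategy is the same as the paper's: reduce to basic open rectangles, use the absence of isolated points to choose suitable distinct points, and invoke 2-transitivity. However, the ``correction'' you make midway introduces a genuine error, caused by misparsing the definition. The definition's conclusion is $x_2=gx_1$ and $y_2=gy_1$ (not $gx_1=y_1$, $gx_2=y_2$), so the ordered pairs $(x_1,x_2)$ and $(y_1,y_2)$ there are (source, image) pairs, and the hypotheses $x_1\neq y_1$ and $x_2\neq y_2$ say precisely that the two \emph{sources} are distinct and the two \emph{images} are distinct --- i.e.\ this is ordinary 2-transitivity on pairs of distinct points. In your notation, where you want $g$ to carry $(x_1,x_2)\in U_1\times U_2$ to $(y_1,y_2)\in V_1\times V_2$ under the diagonal action, the conditions you must arrange are therefore $x_1\neq x_2$ and $y_1\neq y_2$ --- exactly the choice you made first and then abandoned. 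The conditions you finally impose, $x_1\neq y_1$ and $x_2\neq y_2$, do not license the application of 2-transitivity: if, say, $U_1=U_2$, nothing prevents you from having chosen $x_1=x_2$, and then no $g$ can send this single point to two distinct targets $y_1\neq y_2$ (which your choices also do not exclude). Note, moreover, that under your literal reading the definition would be vacuous on any space with at least three points: taking $x_1=x_2=x$ and $y_1\neq y_2$ both different from $x$ would demand $gx=y_1$ and $gx=y_2$ simultaneously.

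The repair is immediate and recovers the paper's argument: pick $x_1\in U_1$ arbitrarily and $x_2\in U_2\setminus\{x_1\}$, which is nonempty because $U_2$ is open, nonempty and cannot be the singleton $\{x_1\}$ (no isolated points); likewise $y_1\in V_1$ and $y_2\in V_2\setminus\{y_1\}$. Then 2-transitivity yields $g$ with $gx_1=y_1$ and $gx_2=y_2$, so $g(U_1\times U_2)\cap(V_1\times V_2)\neq\emptyset$. Your closing remark about Hausdorffness is not needed: one only uses that a nonempty open set with no isolated point has at least two elements, which holds in any space since an isolated point is by definition one whose singleton is open.
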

\begin{proof} 
	Let $O_1$ and $O_2$ be nonempty open subsets in $X \times X$. 
	Choose nonempty open rectangles $U_1 \times V_1 \subset O_1$ and $U_2 \times V_2 \subset O_2$.  
	Since $X$ has no isolated points, there exist $x_1 \in U_1, y_1 \in V_1, x_2 \in U_2, y_2 \in V_2$ such that 
	$x_1 \neq y_1$ and $ x_2 \neq y_2$. The 2-transitivity implies that $x_2 = gx_1, y_2= gy_1$ for some $g \in G$. Then clearly, $gO_1 \cap O_2$ is nonempty.  
\end{proof}

\begin{fact} \label{l:fromWMix}  \ 
\cite[Corollary 9.3]{GM-HNS} Let $(X,\U)$ be a uniform space and $X$ is a weakly mixing $G$-space which is non-sensitive with respect to $\U$. Then $X$ is trivial.  
\end{fact}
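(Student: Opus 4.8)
The plan is a short proof by contradiction, using only the two hypotheses (and the fact that $\U$ induces a Tychonoff, hence separated, topology). Suppose $X$ is \emph{not} a single point, and pick distinct $x \neq y$ in $X$. Then there is an entourage $\eps_0 \in \U$ with $(x,y) \notin \eps_0$; choose a symmetric $\eps \in \U$ with $\eps \circ \eps \circ \eps \subseteq \eps_0$, and open neighborhoods $U$ of $x$ and $V$ of $y$ with $U \subseteq \{w : (x,w) \in \eps\}$ and $V \subseteq \{w : (y,w) \in \eps\}$ (routine in a uniform space). The usual ``triangle inequality'' for entourages then gives $(u,v) \notin \eps$ for \emph{every} $(u,v) \in U \times V$: indeed $(u,v) \in \eps$ together with $(x,u) \in \eps$ and $(y,v) \in \eps$ would force $(x,y) \in \eps \circ \eps \circ \eps \subseteq \eps_0$, a contradiction.

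Next I would invoke non-sensitivity for this particular $\eps$: there is a nonempty open $O \subseteq X$ such that $gO$ is $\eps$-small, i.e. $gO \times gO \subseteq \eps$, for every $g \in G$. Then I would apply weak mixing to the nonempty open subsets $O \times O$ and $U \times V$ of $X \times X$: topological transitivity of the diagonal action gives $g \in G$ with $g(O \times O) \cap (U \times V) \neq \emptyset$, i.e. there is $(a,b) \in O \times O$ with $(ga,gb) \in U \times V$. But $ga, gb \in gO$ forces $(ga,gb) \in \eps$, while $(ga,gb) \in U \times V$ forces $(ga,gb) \notin \eps$. This contradiction shows $X$ is a singleton.

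The only points requiring care — and what I would regard as the (mild) main obstacle — are bookkeeping ones: fixing the convention that a set $A$ is $\eps$-small when $A \times A \subseteq \eps$ and checking that this is exactly what the non-sensitivity hypothesis delivers \emph{uniformly in $g$}; verifying that the separating neighborhoods $U,V$ may be taken open (immediate from the definition of the uniform topology); and arranging the composition $\eps \circ \eps \circ \eps \subseteq \eps_0$ so that the two conclusions $(ga,gb) \in \eps$ and $(ga,gb) \notin \eps$ collide cleanly. There is no deeper difficulty: the statement is precisely the expected clash between ``a whole nonempty open set gets spread into an $\eps$-small set by the entire group'' (non-sensitivity) and ``any two boxes in $X \times X$ can be overlapped by a single group element'' (weak mixing). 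This is the argument of \cite[Corollary 9.3]{GM-HNS}.
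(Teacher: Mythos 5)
Your argument is correct and complete: the triangle-inequality separation of $U$ and $V$ by $\eps$, the choice of a single nonempty open $O$ with $gO$ $\eps$-small for \emph{all} $g$ (which is exactly what the paper's definition of non-sensitivity provides), and the application of topological transitivity of the diagonal action to $O\times O$ and $U\times V$ produce a clean contradiction. The paper states this result only as an imported Fact, citing \cite[Corollary 9.3]{GM-HNS} without reproducing a proof, and your argument is precisely the expected one from that reference, so there is nothing to add.
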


Note that the \textit{affine group} 
$\R^n \rtimes \GL_n(\R)$ can be embedded into $\GL_{n+1}(\R)$ as follows: 
$$\R^n \rtimes \GL_n(\R) \to \GL_{n+1}(\R), \ \ \ M \mapsto \ 
\left(\begin{array}{cc}
	M & v  \\
	0 & 1 
\end{array}\right)$$ 
where $M$ is an $n \times n$ matrix from $\GL_n(\R)$ and $v$ is an $n \times 1$ column.

\begin{prop} \label{t:GL} \
	\begin{enumerate} 
		\item The action of $\GL_2(\R)$ on $\R^2$ is not Asplund representable.
		\item  The conjugation action of the affine group $\R^2 \rtimes \GL_2(\R)$ (and hence, also of $\GL_3(\R)$) is not Asplund representable. 
		\end{enumerate}
\end{prop}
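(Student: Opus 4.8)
The plan is to show the linear $\GL_2(\R)$-action on $\R^2$ is weakly mixing on the relevant open invariant set, then combine weak mixing with the non-sensitivity obstruction (Fact \ref{l:fromWMix} together with Fact \ref{l:fromHNS}.1). First I would observe that the action of $\GL_2(\R)$ on $\R^2 \setminus \{0\}$ is $2$-transitive: given linearly independent-or-not pairs, the point is that $\GL_2(\R)$ acts transitively on bases and, more generally, for distinct nonzero $x_1 \neq y_1$ and $x_2 \neq y_2$ one can find $g \in \GL_2(\R)$ with $gx_1 = x_2$, $gy_1 = y_2$ (handling separately the case where $\{x_1,y_1\}$ is a basis versus the case $y_1 = \lambda x_1$, where one sends the line $\R x_1$ to $\R x_2$ with the right scaling and the argument is elementary). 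Since $\R^2 \setminus \{0\}$ has no isolated points, Lemma \ref{l:2tr-wm} gives that this action is weakly mixing.

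Next, suppose for contradiction that the $\GL_2(\R)$-action on $\R^2$ is Asplund representable. By Fact \ref{l:fromHNS}.1 it embeds (equivariantly, topologically) into a compact HNS $G$-space $K$; then the closure $Y$ of $\R^2 \setminus \{0\}$ inside $K$ is a closed $G$-subspace, hence itself non-sensitive with respect to the induced uniformity (HNS passes to closed subspaces). But $Y$ contains $\R^2 \setminus \{0\}$ as a dense invariant subset on which the action is weakly mixing; topological transitivity of the diagonal action on a dense subset yields topological transitivity of the diagonal action on $Y \times Y$, so $Y$ is weakly mixing and nonsensitive. By Fact \ref{l:fromWMix}, $Y$ is trivial (a single point), contradicting that $Y \supset \R^2 \setminus \{0\}$ has more than one point. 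This proves (1).

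For (2), I would invoke Remark \ref{r:semid}: the linear action of $\GL_2(\R)$ on $\R^2$ is, by definition of the semidirect product $\R^2 \rtimes \GL_2(\R)$, a subaction of the conjugation action of $P := \R^2 \rtimes \GL_2(\R)$ on itself (the copy of $\R^2$ being normal in $P$, and conjugation by the $\GL_2(\R)$-part restricts to the linear action). Asplund representability passes to $G$-subspaces (a proper representation of $P_c$ restricts to a proper representation of any invariant subspace with the same group acting through the inclusion), so if $P_c$ were Asplund representable then so would be the $\GL_2(\R)$-action on $\R^2$, contradicting (1). Finally, $\R^2 \rtimes \GL_2(\R)$ embeds as a closed subgroup of $\GL_3(\R)$ via the displayed affine embedding, and this copy is exactly the stabilizer of the functional picking out the last coordinate — more to the point, one checks that the subgroup $H = \R^2 \rtimes \GL_2(\R) \leq \GL_3(\R)$ is invariant under conjugation by itself, so $H_c$ is a $G$-subspace (in the above sense) of $\GL_3(\R)_c$; hence non-Asplund-representability of $H_c$ forces the same for $\GL_3(\R)_c$.

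The main obstacle I expect is the bookkeeping in the reduction steps — verifying carefully that "Asplund representable" is inherited by the relevant $G$-subspaces (closed invariant subsets of a compact HNS space, and subactions arising from Remark \ref{r:semid} and the affine embedding), and that weak mixing on a dense invariant subset upgrades to weak mixing on its closure so that Fact \ref{l:fromWMix} applies. The $2$-transitivity of $\GL_2(\R) \curvearrowright \R^2 \setminus \{0\}$ is genuinely elementary linear algebra but needs the two cases (independent pair / collinear pair) spelled out; everything after that is a short chain of citations to Facts \ref{l:fromHNS}, \ref{l:fromWMix} and Lemma \ref{l:2tr-wm}.
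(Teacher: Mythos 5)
Your overall strategy (weak mixing plus the non-sensitivity obstruction of Facts \ref{l:fromHNS} and \ref{l:fromWMix}) is the same as the paper's, and your reductions in part (2) via Remark \ref{r:semid} and the affine embedding are fine. But there is a genuine error in the key step of part (1): the action of $\GL_2(\R)$ on $\R^2\setminus\{0\}$ is \emph{not} $2$-transitive in the sense of the paper's definition. A linear map preserves linear dependence, so if $\{x_1,y_1\}$ is a basis while $y_2=\lambda x_2$ is a collinear pair, then no $g\in\GL_2(\R)$ can satisfy $gx_1=x_2$ and $gy_1=y_2$ (such a $g$ would send a basis to a dependent set and hence be singular). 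Your case split conditions only on the type of the source pair $(x_1,y_1)$ and overlooks this mismatch between source and target, so Lemma \ref{l:2tr-wm} cannot be invoked as you do. The gap is repairable: weak mixing of $\GL_2(\R)\curvearrowright\R^2\setminus\{0\}$ is still true, because any two nonempty open sets $U,V\subset\R^2\setminus\{0\}$ contain a linearly \emph{independent} pair $(x_1,y_1)\in U\times V$, and likewise for the target rectangle, and $\GL_2(\R)$ acts transitively on ordered bases; but that is a direct verification of topological transitivity of the diagonal action, not an application of Lemma \ref{l:2tr-wm}.

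For comparison, the paper avoids this issue entirely by restricting to the subgroup $G=\left\{\left(\begin{smallmatrix} a & b\\ 0&1\end{smallmatrix}\right) : a\neq 0\right\}\simeq \R\rtimes\R^{\times}$ acting on the affine line $Y=\{(y,1)^{T}\}\cong\R$, where the induced action $y\mapsto ay+b$ genuinely is $2$-transitive, so Lemma \ref{l:2tr-wm} applies verbatim; it then uses that HNS of $K$ makes \emph{every} (not necessarily closed) $G$-subspace non-sensitive, applying Fact \ref{l:fromWMix} directly to $\nu(Y)\cong\R$ with its induced precompact uniformity. Your alternative of passing to the closure of the invariant set in $K$ and upgrading weak mixing from a dense invariant subset to its closure is correct and only needs non-sensitivity of closed subspaces, so once the weak-mixing claim is fixed (either by the direct argument above or by restricting to the paper's subgroup and line) your proof goes through.
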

\begin{proof} (1) 
	Assuming the contrary, let $\nu \colon \R^2 \to K$ be a proper $\GL_2(\R)$-compactification of $\R^2$ such that $K$ is Asplund representable. Hence, the compact $\GL_2(\R)$-space $K$ is HNS (Fact \ref{l:fromHNS}.1). By definition of HNS, every (not necessarily compact) $G$-subspace $X$ is non-sensitive with respect to the induced (from $K$) precompact uniformity for every subgroup $G$ of $\GL_2(\R)$. 
	We claim that there exist a subgroup $G \subset \GL_2(\R)$ and a weakly mixing $G$-subspace $Y$ in $K$, where $Y$  topologically is homeomorphic to $\R$. By Fact \ref{l:fromWMix} this will provide the desired contradiction. 
	Consider 
	$$Y =\left\{\left( \begin{array}{cc}
		y  \\
		1  
	\end{array}  \right) \middle|  \ y\in \R \right\}, \ \ G =\left\{\left( \begin{array}{cc}
	a & b \\
	0 & 1  
\end{array}  \right) \middle|  \ b\in \R, a \neq 0 \right\} \simeq \R \rtimes  \R^{\times}.$$   
It is easy to see that the natural restricted action of $G$ on $Y$ is 2-transitive, hence also weakly mixing by virtue of Lemma \ref{l:2tr-wm}. Since $\nu$ is an equivariant and topological embedding, we conclude that $\nu(Y)$ is a weakly mixing $G$-subspace of $K$.

	(2) Use (1) and Remark \ref{r:semid}. 
	\end{proof}

\begin{thm} \label{t:SL} \ 
	\begin{enumerate}  
		\item The 
		action of $\SL_n(\Z)$ on $\R^n$ is not Asplund representable for every $n \geq 3$.   
		\item The conjugation action of $\SL_n(\R)$ is not Asplund representable for every $n \geq~4$. 
	\end{enumerate}
\end{thm}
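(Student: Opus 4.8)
The plan is to deduce part (1) from the obstruction ``HNS + weak mixing $\Rightarrow$ trivial'' (Fact~\ref{l:fromHNS}.1 together with Fact~\ref{l:fromWMix}), feeding in the \emph{weak mixing} of the linear $\SL_n(\Z)$-action on $\R^n$ which is supplied by the Dani--Raghavan orbit-closure theorem \cite{DR}; and then to deduce part (2) from part (1) exactly as Proposition~\ref{t:GL}.2 was deduced from Proposition~\ref{t:GL}.1, by realizing the linear action as a subaction of the conjugation action of a parabolic subgroup of $\SL_n(\R)$.

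\emph{Part (1).} Assume, towards a contradiction, that the $\SL_n(\Z)$-space $\R^n$ is Asplund representable for some $n\geq 3$. By Fact~\ref{l:fromHNS}.1 there is a proper $\SL_n(\Z)$-compactification $\nu\colon\R^n\hookrightarrow K$ with $K$ a HNS compact $\SL_n(\Z)$-space; then the $G$-subspace $\nu(\R^n)$ of $K$ is non-sensitive for the precompact uniformity induced from $K$. The claim is that the $\SL_n(\Z)$-space $\R^n$ is weakly mixing when $n\geq 3$, i.e. that the diagonal $\SL_n(\Z)$-action on $\R^n\times\R^n$ is topologically transitive. This is the step where the argument of Dani and Raghavan \cite{DR} is used: for $n\geq 3$ the group $\SL_n(\R)$ acts transitively on ordered pairs of linearly independent vectors (a $2$-frame extends to a basis of determinant $1$), the set of such pairs is open and dense in $\R^n\times\R^n$, and by \cite{DR} the $\SL_n(\Z)$-orbit of a sufficiently generic (``irrational'') $2$-frame is dense in the space of all $2$-frames, hence dense in $\R^n\times\R^n$. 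A dense orbit in the second countable Baire space $\R^n\times\R^n$ yields topological transitivity of the diagonal action, so $\R^n$ is a weakly mixing $\SL_n(\Z)$-space. Since weak mixing is a topological notion and $\nu$ is a $G$-equivariant homeomorphism onto its image, $\nu(\R^n)$ is a weakly mixing, non-sensitive $G$-space, hence trivial by Fact~\ref{l:fromWMix} --- contradicting $\nu(\R^n)\cong\R^n$. (For $n=2$ this fails, consistently with the hypothesis: $\SL_2(\R)$ preserves $\det(v,w)$, so the diagonal action on $\R^2\times\R^2$ is not topologically transitive.)

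\emph{Part (2).} Let $n\geq 4$ and set $m:=n-1\geq 3$. Embed the semidirect product $P:=\R^{m}\rtimes\SL_{m}(\R)$ (standard linear action) as a closed subgroup of $\SL_n(\R)$ by
$$
(v,M)\ \longmapsto\ \begin{pmatrix} M & v\\ 0 & 1\end{pmatrix},
$$
which lands in $\SL_n(\R)$ because $\det M=1$. By Remark~\ref{r:semid}, the linear action of $\SL_m(\R)$ on $\R^m$ is a subaction of the conjugation action $P\times P_c\to P_c$; restricting the acting group further to $\SL_m(\Z)\leq\SL_m(\R)$, the $\SL_m(\Z)$-space $\R^m$ is a subaction of $P_c$. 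Asplund representability passes to subactions (restrict a proper representation $(h,\a)$ of a $G$-space to a subgroup $H\leq G$ and an $H$-invariant subset, noting that the restriction of a topological embedding is a topological embedding); since $\SL_m(\Z)$ acting on $\R^m$ is \emph{not} Asplund representable by part (1) ($m\geq 3$), it follows that $P_c$ is not Asplund representable. Finally, $P$ is a subgroup of $\SL_n(\R)$, so the conjugation action $P\times P\to P$ is precisely the restriction of $\SL_n(\R)\times\SL_n(\R)_c\to\SL_n(\R)_c$ to the subgroup $P$ acting on the subset $P$; hence $P_c$ is a subaction of $\SL_n(\R)_c$. If $\SL_n(\R)_c$ were Asplund representable, so would be $P_c$ --- contradiction.

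The main obstacle is the middle step of part (1): establishing that the $\SL_n(\Z)$-space $\R^n$ is weakly mixing for $n\geq 3$. This is the genuine dynamical input and the source of the hypothesis $n\geq 3$ (equivalently $n\geq 4$ in part (2)); it rests on the Dani--Raghavan analysis of orbit closures of $\SL_n(\Z)$ on frames and is false for $n=2$. The remaining ingredients --- permanence of Asplund representability under passing to subgroups and to invariant subsets, and the identification of the relevant parabolic subgroup of $\SL_n(\R)$ with $\R^{n-1}\rtimes\SL_{n-1}(\R)$ --- are routine and the overall structure parallels the proof of Proposition~\ref{t:GL}.
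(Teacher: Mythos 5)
Your proposal is correct and follows essentially the same route as the paper: part (1) combines the Dani--Raghavan weak mixing of the linear $\SL_n(\Z)$-action on $\R^n$ ($n\ge 3$) with the obstruction that a non-sensitive weakly mixing system is trivial (Facts~\ref{l:fromHNS} and~\ref{l:fromWMix}), and part (2) embeds the special affine group $\R^{n-1}\rtimes\SL_{n-1}(\R)$ into $\SL_n(\R)$ and passes to subactions via Remark~\ref{r:semid}. The extra detail you supply (deriving weak mixing from dense orbits of generic $2$-frames, and spelling out the permanence of Asplund representability under subactions) only elaborates steps the paper leaves implicit.
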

\begin{proof} (1) 
	By a result of S.G. Dani and S. Raghavan
	\cite{DR}, the natural linear action of $\SL_n(\Z)$ on $\R^n$ is weakly mixing for every $n \geq 3$.  
	Now Facts \ref{l:fromHNS} and \ref{l:fromWMix} imply that this action is not Asplund representable.
	
	(2) The special affine group $\R^n \rtimes \SL_n(\R)$ can be embedded into $\SL_{n+1}(\R)$ as follows: 
	$$\R^n \rtimes \SL_n(\Z) \to \SL_{n+1}(\R), \ \ \ M \mapsto \ 
	\left(\begin{array}{cc}
		M & v  \\
		0 & 1 
	\end{array}\right)$$ 
	where $M$ is an $n \times n$ matrix from $\SL_n(\Z)$ and $v$ is an $n \times 1$ column. Now use (1). 
	\end{proof}

\sk 
\subsection{Rosenthal representability and the tameness}
\label{s:tame} 
For definitions of tame systems and the enveloping semigroup we refer to the introduction. 

%

\begin{fact} \label{f:Ref-Ros} \cite{GM-HNS,GM-rose} 
	For a compact metric $G$-space $X$ the following conditions are equivalent:
	\begin{enumerate}
		\item The $G$-space $X$ is representable on a (separable) Rosenthal Banach space.   
		\item The $G$-space $X$ is tame. 
		\item The enveloping semigroup $E(X)$, as a (compact) topological space, is Frechet. 
		\item $card (E(X)) \leq 2^{\aleph_0}$.
	\end{enumerate} 
\end{fact}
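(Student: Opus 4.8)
The plan is to split the cycle of implications into the purely \emph{dynamical} equivalences $(2)\Leftrightarrow(3)\Leftrightarrow(4)$ and the \emph{functional-analytic} equivalence $(1)\Leftrightarrow(2)$, paralleling the treatment of Facts \ref{f:Ref-WAP} and \ref{l:fromHNS}.

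\textbf{The dynamical part $(2)\Leftrightarrow(3)\Leftrightarrow(4)$.} Since $X$ is compact metrizable, $E(X)\subset X^X$ is a pointwise-compact set of self-maps of a Polish space, hence, read coordinatewise, a pointwise-compact family of real functions on the Polish space $X$. First I would recall the reformulation of tameness: $X$ is tame if and only if every $p\in E(X)$ is a function of Baire class $1$, equivalently for every $f\in C(X)$ each cluster point of $fG$ in $\R^X$ is of Baire class $1$. Both directions here go through Rosenthal's $l_1$-dichotomy, which equates ``$fG$ contains no independent sequence'' with ``$fG$ contains no $l_1$-sequence'' and with ``the pointwise closure of $fG$ in $\R^X$ consists of Baire-$1$ functions''. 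Then the Bourgain--Fremlin--Talagrand theory of pointwise-compact sets of Baire-$1$ functions on a Polish space (Rosenthal compacta) immediately gives that such a space is Fr\'echet and of cardinality $\le 2^{\aleph_0}$, which yields $(2)\Rightarrow(3)$ and $(2)\Rightarrow(4)$. For the two converses I would use the dichotomy in its strong form: if $X$ is not tame, then some $f\in C(X)$ admits a (combinatorially) independent sequence inside $fG$, and the standard combinatorial upgrade (Rosenthal, Talagrand) produces a topological embedding of $\beta\N$ into $E(X)$; this forces $|E(X)|=2^{2^{\aleph_0}}$, contradicting $(4)$, and $\beta\N$ is not Fr\'echet, contradicting $(3)$.

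\textbf{$(1)\Rightarrow(2)$.} Assume a proper representation $(h,\alpha)$ of $X$ on a separable Rosenthal space $V$; identify $X$ with a weak-star compact subset of $B_{V^*}$ on which $G\le\Iso_l(V)$ acts. For $v\in V$ set $f_v:=\langle v,\cdot\rangle|_X\in C(X)$. From the dual action (Remark \ref{r:daction}) one computes $f_v\circ g=f_{g^{-1}v}$, hence $f_vG=\{f_w|_X:\ w\in Gv\}$ with $Gv$ norm-bounded in $V$. Given any sequence $(f_{w_k})$ in $f_vG$, the Rosenthal property of $V$ yields a weak-Cauchy subsequence $(w_{k_j})$, so $(f_{w_{k_j}})$ converges pointwise on $X\subset B_{V^*}$; since by Rosenthal's theorem an independent sequence has no pointwise-convergent subsequence, $f_vG$ contains no independent sequence, i.e.\ $f_v\in\Tame(X)$. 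As $\Tame(X)$ is a uniformly closed $G$-invariant subalgebra of $C(X)$ containing the constants (Remark \ref{r:repr-of-funct}) and the $f_v$ separate points ($\alpha$ being an embedding), Stone--Weierstrass gives $\Tame(X)=C(X)$, i.e.\ $X$ is tame.

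\textbf{$(2)\Rightarrow(1)$ and the main obstacle.} Fix a sequence $f_n\in C(X)$ with $\|f_n\|\le 1$ separating the points of $X$ (possible as $X$ is compact metric); by tameness each $f_n\in\Tame(X)$. The key ingredient is the generalized Davis--Figiel--Johnson--Pe\l czy\'nski factorization for tame $G$-functions: a tame $f$ on a $G$-space factors through an equivariant representation on a \emph{separable Rosenthal} Banach space $V_f$, obtained by interpolating the closed absolutely convex hull of the orbit $fG$, the absence of $l_1$-sequences in the pointwise closure of $fG$ being exactly what makes $V_f$ an $l_1$-free space. One then forms $V:=\left(\sum_n V_{f_n}\right)_{l_2}$, which is again separable and Rosenthal (an $l_2$-sum of separable $l_1$-free spaces is separable and $l_1$-free), and assembles the coordinate maps into a representation of $X$ on $V$ whose associated map $X\to V^*$ separates points; being a map from a compact space, it is then a topological embedding, so $X$ is representable on a separable Rosenthal space. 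I expect this implication to be the crux: making the factorization rigorous for a single tame function, verifying that the resulting interpolation space genuinely avoids $l_1$, and checking the permanence of the Rosenthal class under $l_2$-sums. The chain $(2)\Leftrightarrow(3)\Leftrightarrow(4)$ is delicate only through its reliance on the Bourgain--Fremlin--Talagrand machinery and the $\beta\N$-embedding, which I would invoke as black boxes.
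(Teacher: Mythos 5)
The paper gives no proof of this Fact; it is quoted from \cite{GM-HNS,GM-rose}, and your outline reconstructs essentially the argument of those sources: the dynamical Bourgain--Fremlin--Talagrand dichotomy for $(2)\Leftrightarrow(3)\Leftrightarrow(4)$ (tame iff $E(X)$ consists of Baire-1 maps iff $E(X)$ is a Rosenthal compactum, versus a topological copy of $\beta\N$ inside $E(X)$ in the untame case), the weak-Cauchy plus Stone--Weierstrass argument for $(1)\Rightarrow(2)$, and the equivariant Davis--Figiel--Johnson--Pe\l czy\'nski interpolation together with $l_2$-sums for $(2)\Rightarrow(1)$. The one place where your wording is looser than the actual argument is the reduction of $E(X)\subset X^X$ to real-valued functions ``coordinatewise'': since the Fr\'echet property is not preserved by countable products, one should apply the BFT theorem directly to pointwise-compact sets of Baire-1 maps into the metric space $X$ (as the cited papers do), rather than deduce Fr\'echetness coordinate by coordinate. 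With that understood, the plan is sound and matches the proof the paper relies on.
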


The following theorem was obtained very recently in a joint work with E. Glasner \cite{GM-TC}. We include the proof for the sake of completeness.  

\begin{thm} \label{t:LinAreTame} \cite{GM-TC} 
	The linear action $\GL_n(\R) \times \R^n \to \R^n$ 
	is Rosenthal representable. 
\end{thm}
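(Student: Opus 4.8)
The plan is to realize $\R^n$ as a dense invariant open subset of real projective space and then invoke the enveloping semigroup criterion for tameness. Embed $\R^n \hookrightarrow \Prm^n(\R)$ as the affine chart $v \mapsto [v:1]$, and let $\GL_n(\R)$ act on $\Prm^n(\R)$ through the block embedding $M \mapsto \operatorname{diag}(M,1) \in \GL_{n+1}(\R)$, i.e.\ by the restriction of the standard projective action of $\GL_{n+1}(\R)$. This action is jointly continuous, the chart $\{[y]: y_{n+1}\neq 0\}\cong \R^n$ is an invariant dense open subset on which the action is the original linear one, and $\Prm^n(\R)$ is compact metrizable; hence $\Prm^n(\R)$ is a proper $\GL_n(\R)$-compactification of the $\GL_n(\R)$-space $\R^n$. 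Since a proper representation of a $G$-space restricts to a proper representation of any $G$-invariant subspace, and since by Fact \ref{f:Ref-Ros} a compact metric $G$-system is Rosenthal representable iff it is tame, it suffices to show that $\Prm^n(\R)$ is a tame $\GL_n(\R)$-system. Tameness passes to subgroups of the acting group (an $l_1$-sequence in an orbit $fH$, $H\leq G$, is an $l_1$-sequence in $fG$), so it is enough to prove that $\Prm^n(\R)$ is tame under the full projective action of $\GL_{n+1}(\R)$.

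For this I would use condition (4) of Fact \ref{f:Ref-Ros}: it suffices to check $\operatorname{card} E(\Prm^n(\R)) \leq 2^{\aleph_0}$. Given $\sigma \in E(\Prm^n(\R))$, pick a net $A_\lambda \in \GL_{n+1}(\R)$, rescaled so that $\|A_\lambda\|=1$, with $[A_\lambda]\to\sigma$ pointwise. Passing to a subnet, $A_\lambda \to A$ in $M_{n+1}(\R)$ with $\|A\|=1$; then $W_1:=\ker A \subsetneq \R^{n+1}$ and $\sigma[v]=[Av]$ for all $[v]\notin\Prm(W_1)$. Now iterate on $W_1$: since each $A_\lambda$ is invertible, $A_\lambda|_{W_1}\neq 0$, while $A_\lambda|_{W_1}\to 0$; after a further subnet $A_\lambda|_{W_1}/\|A_\lambda|_{W_1}\| \to B_1$ with $\|B_1\|=1$, and $\sigma[v]=[B_1v]$ for $[v]\in\Prm(W_1)\setminus\Prm(\ker B_1|_{W_1})$. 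Put $W_2:=\ker(B_1|_{W_1})\subsetneq W_1$ and repeat. Dimensions strictly decrease, so after $k\leq n+1$ steps one obtains a flag $\R^{n+1}=W_0\supsetneq W_1\supsetneq\cdots\supsetneq W_k=\{0\}$ and injective linear maps $C_i\colon W_{i-1}/W_i \hookrightarrow \R^{n+1}$ (with $C_1$ induced by $A$) such that $\sigma$ acts on the stratum $\Prm(W_{i-1})\setminus\Prm(W_i)$ by the projective map induced by $C_i$; these strata cover $\Prm^n(\R)$. Thus every $\sigma\in E(\Prm^n(\R))$ is the image of a parameter consisting of a flag together with finitely many projective classes of partial injective linear maps. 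The set of such parameters is a finite union of (images of) finite-dimensional real varieties — finitely many flag manifolds, each step contributing a factor inside some $\Prm(\operatorname{Hom}(W_{i-1},\R^{n+1}))$ — hence has cardinality $(2^{\aleph_0})^{\leq n+1}=2^{\aleph_0}$. Therefore $\operatorname{card} E(\Prm^n(\R))\leq 2^{\aleph_0}$.

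By Fact \ref{f:Ref-Ros} this makes $\Prm^n(\R)$ a tame $\GL_{n+1}(\R)$-system, hence a tame $\GL_n(\R)$-system, hence a Rosenthal representable compact metric $\GL_n(\R)$-space; restricting a proper representation of $\Prm^n(\R)$ to the invariant subspace $\R^n$ produces the required proper $\GL_n(\R)$-representation of $\R^n$ on a Rosenthal Banach space. The main obstacle I anticipate is the enveloping semigroup computation: one must carefully organize the iterated renormalizations on the successive kernels (checking that each restriction $A_\lambda|_{W_i}$ is nonzero and subconverges after rescaling), correctly identify the pointwise limit of $\sigma$ on each stratum of the resulting flag, and verify that this flagged chain of partial projective maps genuinely exhausts all of $E(\Prm^n(\R))$, so that the cardinality bound applies. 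The rest is routine bookkeeping and appeals to the quoted facts.
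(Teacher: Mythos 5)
Your proof is correct, but it follows a genuinely different route from the paper's. The paper compactifies $\R^n$ by its Alexandrov one-point compactification $X=\R^n\cup\{\infty\}$ and observes that every $p\in E(X)$ is a \emph{partial linear endomorphism}: the set $V_p=p^{-1}(\R^n)$ is a linear subspace, $p|_{V_p}$ is linear, and $p\equiv\infty$ off $V_p$; this is a two-line computation with a net of linear maps and immediately gives $\operatorname{card}E(X)\le 2^{\aleph_0}$, whence tameness by Fact \ref{f:Ref-Ros}(4). You instead compactify inside $\Prm^n(\R)$ via the affine chart and the block embedding into $\GL_{n+1}(\R)$, and then carry out the stratified renormalization of a net $A_\lambda$ along the flag of successive kernels --- in effect reproving (a special case of) Ellis's description of the enveloping semigroup of projective flows, which the paper only cites in its closing remark. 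Both arguments hinge on the same cardinality criterion and your reductions (properness of the compactification, restriction of representations to invariant subspaces, tameness passing to subgroups and to the iterated renormalizations) are all sound. What the paper's choice buys is economy: the one-point compactification is the smallest proper $G$-compactification (indeed a $G$-factor of your $\Prm^n(\R)$, obtained by collapsing the hyperplane at infinity), so the limit maps have the simplest possible form and no flag bookkeeping is needed. What your choice buys is more information: an explicit structure theory for $E(\Prm^n(\R))$ under the full projective action, which also yields the tameness of the projective $\GL_{n+1}(\R)$-flow itself, a stronger statement than the theorem requires.
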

\begin{proof} By results of \cite{GM-rose}, 
	 every Rosenthal representable $G$-space is embedded into a tame compact $G$-space and the tameness is preserved by the $G$-factors of compact $G$-spaces. Hence, by Fact \ref{f:Ref-Ros} 
	it is enough to show that the 1-point compactification $X:=\R^n \cup \{\infty\}$ is a tame $\GL_n(\R)$-system. Or, equivalently, that the cardinality of the enveloping semigroup $E(X)$ is not greater than $2^{\aleph_0}$. Let $p \in E(X)$. 
	Define 
	$$
	V_p:=p^{-1}(\R^n)=\{v \in \R^n : \ p(v) \in \R^n\}=\{v \in X : \ p(v) \neq \infty\}.
	$$
	\sk 
\noindent	\textbf{Claim:}  $V_p$ is a linear subspace (hence, closed) in $\R^n$ and the restriction $p| \colon V_p \to \R^n$ is a (continuous) linear map. 
	\sk 
	Indeed, let $g_i$ be a net in $T$ such that $\lim g_i=p$ in $E$. If 
	$u,v \in V_p$ 
	then $\lim g_iu=p(u) \in \R^n$ and $\lim g_iv=p(v) \in \R^n$. 
	Then by the linearity of maps $g_i$ we obtain 
	$$\lim g_i (c_1u+c_2v)=c_1p(u)+c_2p(v) \in \R^n.$$
	 Since  $V_p$ is finite dimensional, 
	the linear map $p|_{V_p}  \colon V_p \to \R^n$ is necessarily continuous.  	
	
	\sk 
	Using this claim we obtain that $card (E(X)) \leq 2^{\aleph_0}$. 
\end{proof}

\begin{remark} 
	The enveloping semigroup $E(X)$ of the action from Theorem \ref{t:LinAreTame}   
	can be identified with the semigroup of all partial linear endomorphisms 
	of $\R^n$.  
	To see this, observe that the claim from the proof of Theorem \ref{t:LinAreTame} can be reversed. 
	Namely, every \textit{partial} linear endomorphism $f \colon V \to W$ of $\R^n$ defines an element $p \in E(X)$ such that 
	$$
	V=p^{-1}(\R^n), \ p(V)=W, \ 
	p(x)=f(x) \ \forall x \in V, \ p(y)=\infty \ \forall y \notin V.
	$$
	Moreover, that assignment is a semigroup isomorphism: (partial) composition corresponds
	to the product of suitable elements from the enveloping semigroup. 
\end{remark} 

\begin{thm} \label{t:Lebed} 
	For every $n \geq 2$ 
	there exists a topological group automorphism $\sigma \colon \T^n \to \T^n$ 
	on the $n$-dimensional torus $\T^n$ 
	such that the corresponding 
	action of the cyclic group $\Z$ on $\T^n$ by the iterations 
	is not Rosenthal representable.  
\end{thm}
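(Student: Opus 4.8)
The plan is to reduce the statement to a question about tameness and then exploit positive topological entropy. Since $\T^n$ together with the $\Z$-action generated by any automorphism $\sigma$ is a compact metrizable dynamical system, Fact~\ref{f:Ref-Ros} says that for such a cascade ``Rosenthal representable'' is equivalent to ``tame''. So it is enough to exhibit, for each $n\ge 2$, an automorphism $\sigma$ of $\T^n$ whose cascade is \emph{not} tame.

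\textbf{Step 1: the automorphism.} First I would pick a hyperbolic integer matrix $A\in\SL_n(\Z)$, i.e.\ one with no eigenvalue of modulus $1$. For $n=2$ take Arnold's cat map $A=\left(\begin{smallmatrix}2&1\\1&1\end{smallmatrix}\right)$, with eigenvalues $(3\pm\sqrt5)/2$. For general $n$ one can take a block-diagonal sum of cat maps when $n$ is even, adjoining one fixed $3\times3$ hyperbolic block of determinant $1$ when $n$ is odd; or, more uniformly, take the companion matrix of a monic irreducible polynomial in $\Z[x]$ of degree $n$ with constant term $\pm1$ and no root on the unit circle. Let $\sigma=\sigma_A\colon\T^n\to\T^n$, $x+\Z^n\mapsto Ax+\Z^n$, be the induced topological group automorphism. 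This cascade is expansive, and its topological entropy equals $\sum_{|\lambda|>1}\log|\lambda|$, the sum over the eigenvalues of $A$ outside the unit circle, hence is strictly positive.

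\textbf{Step 2: non-tameness.} Here I would invoke the theorem of Kerr and Li that a compact metric $\Z$-system with positive topological entropy is never tame: positive entropy forces a non-diagonal IE-pair, and such a pair produces a function $f\in C(\T^n)$ whose orbit $\{f\circ\sigma^k:k\in\Z\}$ contains a combinatorially independent sequence in the sense of Rosenthal, which is exactly a failure of tameness (the dynamical Bourgain--Fremlin--Talagrand picture of Fact~\ref{f:Ref-Ros} is another way to package this). Combining Steps~1 and~2 with Fact~\ref{f:Ref-Ros} shows that $(\T^n,\sigma)$ is not representable on any Rosenthal Banach space, proving the theorem.

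\textbf{On the stronger ``Moreover'' and the main obstacle.} To get the assertion that the same holds for \emph{every} infinite-order automorphism of $\T^n$ (where an eigenvalue of modulus $1$ may be present and entropy can vanish) one cannot use entropy, and instead works directly with characters: for $0\ne m\in\Z^n$ one has $\chi_m\circ\sigma_A^{\,k}=\chi_{(A^{\top})^k m}$, the frequency vectors $(A^{\top})^k m$ being pairwise distinct because $A^{\top}$ has infinite order on $\Z^n$, and they spread out so that Lebedev's recent theorem \cite{Leb} makes the orbit of $\mathrm{Re}\,\chi_m$ contain a combinatorially independent subsequence. In either route the hard step is exactly the same --- actually producing the $l_1$-sequence inside an orbit --- and it is where the real work lives: one needs either the Kerr--Li independence/entropy machinery or Lebedev's harmonic-analysis estimates. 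By contrast, the choice of the matrix $A$, the reduction through Fact~\ref{f:Ref-Ros}, and the entropy/expansiveness bookkeeping are all routine.
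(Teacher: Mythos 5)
Your proposal is correct and follows essentially the same route as the paper: choose a hyperbolic automorphism of $\T^n$, note that positive topological entropy rules out tameness by the Kerr--Li result \cite{KL}, and conclude via Fact~\ref{f:Ref-Ros}. Your extra care in exhibiting hyperbolic matrices in $\SL_n(\Z)$ for every $n\ge 2$ (which the paper leaves implicit) and your sketch of the Lebedev-based strengthening (which the paper relegates to Remark~\ref{r:Lebed}) are welcome but do not change the argument.
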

\begin{proof} 
	For every hyperbolic toral automorphism $\sigma \colon \T^n \to \T^n$, the corresponding cascade has positive entropy. Hence, it cannot be tame by a result of Kerr and Li \cite{KL} because tameness implies zero entropy. Therefore, such a cascade is not Rosenthal representable according to Fact \ref{f:Ref-Ros}.  
	\end{proof} 

\begin{remark} \label{r:Lebed}
	By a recent paper of V. Lebedev \cite{Leb}, for every $\sigma \in \SL_n(\Z)$ the corresponding cascade $\Z \times \T^n \to \T^n$ 
	 is tame (if and) only if $\sigma^k=I$ for some natural $k$.  So, in the proof of Theorem \ref{t:Lebed} one may consider any $\sigma \in \SL_n(\Z)$ with infinite order.  
\end{remark}

\begin{corol} \label{c:notTame1} 
	For the two dimensional Lie group $\T^2 \rtimes \Z$, its conjugation action is not Rosenthal representable. 
\end{corol}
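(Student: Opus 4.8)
The plan is to deduce this from Theorem~\ref{t:Lebed} together with Remark~\ref{r:semid}. Fix a hyperbolic toral automorphism $\sigma\colon\T^2\to\T^2$ — for instance the one induced by the matrix $\left(\begin{smallmatrix}2&1\\1&1\end{smallmatrix}\right)\in\SL_2(\Z)$ — and set $P:=\T^2\rtimes_\sigma\Z$, the associated topological semidirect product. Since $\Z$ acts on the $2$-dimensional Lie group $\T^2$ by Lie group automorphisms, $P$ is a $2$-dimensional Lie group (with countably many connected components). By Theorem~\ref{t:Lebed} with $n=2$, the cascade $(\T^2,\sigma)$ — i.e. the action of $\Z$ on $\T^2$ by the iterates of $\sigma$ — is not Rosenthal representable; being compact metrizable, it is therefore not tame, by Fact~\ref{f:Ref-Ros}. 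I want to show that Rosenthal representability of the conjugation $P$-space $P_c$ would force $(\T^2,\sigma)$ to be tame, which is a contradiction.

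So suppose $P_c$ is Rosenthal representable. By the results of \cite{GM-rose} recalled in the proof of Theorem~\ref{t:LinAreTame}, there is then a $P$-equivariant topological embedding $j\colon P_c\hookrightarrow Y$ into a \emph{tame} compact $P$-space $Y$. Next I apply Remark~\ref{r:semid} to the action $\alpha$ of $\Z$ on $\T^2$ by the powers of $\sigma$: the subgroup $\T^2\times\{0\}$ is normal in $P=\T^2\rtimes_\sigma\Z$, and, identifying $\Z$ with $\{e_{\T^2}\}\times\Z\le P$, the restriction of the conjugation action of $P$ on $P_c$ to $\Z\times\T^2$ is precisely $\alpha$. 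Since $\T^2$ is a (closed) subgroup of $P$, its inclusion $\T^2\hookrightarrow P_c$ is a topological embedding, and $\T^2$ is $\Z$-invariant with the $\Z$-action equal to $\alpha$; composing with $j$ yields a $\Z$-equivariant topological embedding of the compact cascade $(\T^2,\sigma)$ into $Y$, now viewed merely as a $\Z$-space.

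Finally I invoke heredity. First, $Y$ stays tame after restricting the acting group from $P$ to $\Z$, since for every $f\in C(Y)$ the orbit $f\Z$ is contained in $fP$ and hence contains no independent sequence. Second, tameness passes to closed $G$-subspaces (restrict $C(Y)$ to $j(\T^2)$), so the compact metrizable $\Z$-system $j(\T^2)\cong(\T^2,\sigma)$ is tame, hence Rosenthal representable by Fact~\ref{f:Ref-Ros}. This contradicts Theorem~\ref{t:Lebed}, so $P_c$ is not Rosenthal representable, as claimed. No step here is really difficult; the one place that must be handled with care is the passage through Remark~\ref{r:semid} — verifying that $\T^2$ genuinely embeds as a $\Z$-invariant topological subspace of $P_c$ carrying the correct action — together with the (standard but essential) facts that both ``being a closed $G$-subsystem'' and ``restricting the acting group to a subgroup'' preserve tameness. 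One could alternatively route the argument through the homogeneous space $P/\Z$ using Fact~\ref{f:SemidCoset}, but the direct path above is shortest.
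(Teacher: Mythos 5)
Your argument is correct and is exactly the paper's proof, which simply cites Theorem~\ref{t:Lebed} (for $n=2$) together with Remark~\ref{r:semid}; you have merely written out in full the routine verifications (restriction of the acting group and passage to a closed invariant subspace preserve tameness) that the paper leaves implicit.
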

\begin{proof} 
	Use Theorem \ref{t:Lebed} (for $n=2$) and take into account Remark \ref{r:semid}. 
	\end{proof}

\begin{thm} \label{t:notTame} 
	There exists a metrizable profinite (compact zero-dimensional) 
	 topological group $X$ and a topological group automorphism $a\colon X \to X$ such that the corresponding action of the cyclic group $\Z$ on $X$ is not Rosenthal representable. 
\end{thm}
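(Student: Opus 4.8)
The plan is to realize, in the profinite setting, the same phenomenon that hyperbolic toral automorphisms provide in Theorem \ref{t:Lebed}: we need a metrizable profinite group carrying an automorphism of positive topological entropy, and the natural candidate is a \emph{full Bernoulli shift}. Fix a nontrivial finite abelian group $F$, say $F=\Z/2\Z$, and let $X:=F^{\Z}$ carry the product topology and the coordinatewise group operation. As a countable product of finite groups, $X$ is a metrizable, compact, zero-dimensional topological group, i.e.\ a metrizable profinite group. Let $a\colon X\to X$ be the shift, $(ax)_k=x_{k+1}$; it is evidently a homeomorphism and a group homomorphism, hence a topological group automorphism of $X$, and the cascade $\Z\times X\to X$ obtained by iterating $a$ is the two-sided full $F$-shift.

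The next step is to show that this cascade is not tame. Two routes are available. The quickest one invokes the theorem of Kerr and Li \cite{KL} already used in the proof of Theorem \ref{t:Lebed}: tameness implies zero topological entropy, whereas the full $F$-shift has topological entropy $\log|F|>0$. Alternatively, one argues directly from the definition of tameness. Identify $F$ with a finite subset of $\R$ containing $0$ and $1$, and take $f\in C(X)$ to be the zeroth coordinate, $f(x)=x_0$. Then the orbit elements $fa^{n}\colon x\mapsto x_{n}$ ($n\in\Z$) form a combinatorially independent sequence in the sense of Rosenthal \cite{Ros74}: with the thresholds $a=\tfrac13<\tfrac23=b$, for any disjoint finite sets $P,Q\subset\Z$ one simply takes the point $x\in X$ with $x_n=0$ for $n\in P$, $x_n=1$ for $n\in Q$ and $x_n=0$ otherwise. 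Hence the orbit of $f$ contains an independent sequence and $X$ is not tame. Either way, $(X,a)$ is a non-tame compact metrizable $\Z$-system.

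Finally, since $X$ is compact metrizable and $\Z$ acts continuously on it, the equivalence of items (1) and (2) in Fact \ref{f:Ref-Ros} applies: $X$ is representable on a Rosenthal Banach space (as a $\Z$-space) if and only if it is tame. As it is not tame, the $\Z$-action on $X$ by the iterations of $a$ is not Rosenthal representable, which is the assertion.

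I do not anticipate a genuine obstacle here; the only delicate points are bookkeeping — verifying that the shift really is a group automorphism of the product group (immediate), and that the exhibited orbit is independent in the precise sense of \cite{Ros74} (or, on the entropy route, quoting the entropy value of the full shift and the Kerr--Li theorem correctly). It is worth recording that this construction is the profinite counterpart of Theorem \ref{t:Lebed}, and that, via Remark \ref{r:semid} applied to $X\rtimes_{a}\Z$, it feeds the subsequent statements about conjugation actions of non-archimedean locally compact groups.
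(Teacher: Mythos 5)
Your proof is correct, but it takes a genuinely different (and more economical) route than the paper. The paper also starts from the full Bernoulli shift $K=\{0,1\}^{\Z}$ as a non-tame compact metrizable zero-dimensional $\Z$-system, but it does \emph{not} use the group structure on $K$ itself; instead it passes to the free profinite group $X:=F_{Pro}(K)$, invoking \cite{RZ} and \cite[Theorem 5.1]{MeSh-FrNA13} to get that $F_{Pro}(K)$ is compact metrizable and that the shift extends functorially to an action of $\Z$ by group automorphisms, and then concludes non-tameness of $X$ from non-tameness of the subsystem $K$. You shortcut all of this by observing that $\{0,1\}^{\Z}=(\Z/2\Z)^{\Z}$ with coordinatewise addition is already a metrizable profinite group on which the shift is a topological group automorphism, so no free construction or external citation beyond Kerr--Li (or your direct independence argument, which is also correct: the coordinate functions $x\mapsto x_n$ form an independent sequence with thresholds $\tfrac13<\tfrac23$) is needed. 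What the paper's heavier machinery buys is generality: the $F_{Pro}$-functor upgrades \emph{any} non-tame compact metrizable zero-dimensional $\Z$-system to an automorphic action on a metrizable profinite group, whereas your argument exploits the special fact that the full shift happens to carry a compatible group structure. For the theorem as stated your version is entirely adequate and arguably preferable; the only cosmetic blemish is the clash of notation between the automorphism $a$ and the threshold $a=\tfrac13$.
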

\begin{proof} Consider a compact metrizable zero-dimensional $\Z$-space $K$ which is not tame. 
	For example, take the full Bernoulli shift $K:=\{0,1\}^{\Z}$. 
	It is well known that the enveloping semigroup of the $\Z$-space $K$ is $\beta \Z$. Hence, $K$ is not tame according to Fact \ref{f:Ref-Ros}. 

	Consider the \textit{free profinite group} $X:=F_{Pro}(K)$ of the Cantor space $K$. Then $F_{Pro}(K)$ is a compact \textbf{metrizable} group (see \cite{RZ} and  \cite[Theorem 5.1]{MeSh-FrNA13}). Moreover, we have a continuous action 
	$$\a \colon \Z \times F_{Pro}(K) \to F_{Pro}(K)$$ by group automorphisms. This action is not tame because its subaction $\Z \times K \to K$ is not tame. Therefore, the dynamical $\Z$-system $X$ is not Rosenthal representable. 
\end{proof} 

We denote by $S_{\infty}$ the symmetric topological group endowed with the pointwise topology acting on the discrete set $\N$.  
A topological group $G$ is \textit{non-archimedean} if open subgroups 
form 
a local topological basis. 

\begin{corol} \label{c:notTame} 
	There exists a locally compact Polish non-archimedean 
	group $G$ such that the $G$-space $G_c$ is not Rosenthal representable.
\end{corol}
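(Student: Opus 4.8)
The group to use is the semidirect product $G := X \rtimes_a \Z$, where $X = F_{Pro}(\{0,1\}^\Z)$ and $a\colon X \to X$ are exactly the profinite group and the automorphism produced in the proof of Theorem \ref{t:notTame}, and $\Z$ acts on $X$ via the powers of $a$. The proof then consists of three routine steps: (i) check $G$ has the stated topological properties; (ii) realize the non-Rosenthal-representable $\Z$-system $X$ as a subaction of $G_c$; (iii) invoke hereditarity of Rosenthal representability under subactions.

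For step (i): topologically $G$ is the product $X \times \Z$ (with twisted multiplication), so it is locally compact (each slice $X \times \{n\}$ is a compact open set), second countable and completely metrizable, hence Polish. Since $X=F_{Pro}(K)$ is profinite it has a neighbourhood basis at $e_X$ consisting of clopen subgroups $U$; each $U \times \{0\}$ is then a clopen subgroup of $G$, and these form a neighbourhood basis at the identity of $G$, so $G$ is non-archimedean. (Being an extension of a profinite group by $\Z$, the group $G$ is moreover elementary in the sense of Wesolek \cite{Wesolek}, as announced in the introduction.)

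For step (ii): apply Remark \ref{r:semid} with acting group $\Z$. Identifying $\Z$ with $\{e_X\}\times\Z \le G$ and $X$ with the normal subgroup $X \times \{0\} \trianglelefteq G$, one has $(e_X,n)(x,0)(e_X,n)^{-1}=(a^n x,0)$ for all $x\in X$, $n\in\Z$. Hence the natural homeomorphism $X \to X\times\{0\}\subseteq G_c$ is a $\Z$-equivariant topological embedding onto a $\Z$-invariant subspace of $G_c$; that is, the $\Z$-system $(X,a)$ is a subaction of the conjugation action $G \times G_c \to G_c$.

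For step (iii): if $(h,\alpha)$ is a proper representation of the $G$-space $G_c$ on a Rosenthal Banach space $V$, with $h\colon G \to \Iso_l(V)$ and $\alpha\colon G_c \to V^*$, then restricting $h$ to the subgroup $\Z \le G$ and $\alpha$ to the invariant subset $X\times\{0\}\cong X$ gives a proper representation of the $\Z$-space $X$ on the same Rosenthal space $V$ (a continuous homomorphism restricts to a continuous homomorphism, and the restriction of a topological embedding to a subspace is again a topological embedding). Thus Rosenthal representability of $G_c$ would force Rosenthal representability of the $\Z$-space $X$, contradicting Theorem \ref{t:notTame}. There is no genuine obstacle here: the only non-bookkeeping ingredient is the hereditarity observation of step (iii), which is immediate from Definition \ref{d:repr}, and all the dynamical content is already contained in Theorem \ref{t:notTame}.
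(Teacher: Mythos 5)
Your proposal is correct and follows exactly the paper's own route: take $G:=X\rtimes_a\Z$ for the profinite $\Z$-group $X$ of Theorem \ref{t:notTame}, realize $(X,a)$ as a subaction of the conjugation action via Remark \ref{r:semid}, and conclude by hereditarity of Rosenthal representability under subactions. The only difference is that you spell out the routine verifications (local compactness, Polishness, non-archimedeanness of $G$, and the restriction argument for representations) which the paper leaves implicit.
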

\begin{proof} Let $\a \colon \Z \times F_{Pro}(K) \to F_{Pro}(K)$ be the action from the proof of Theorem \ref{t:notTame}.   The corresponding topological semidirect product $G:=X \rtimes_{\a} \Z$  is the desired group, where $X$ is a $\Z$-group from Theorem \ref{t:notTame}. Indeed, the conjugation action $G \times G_c \to G_c$ of this group is not Rosenthal representable (otherwise, by Remark \ref{r:semid}, the same is true for its subaction $\a \colon \Z \times X \to X$).  
	\end{proof}

This locally compact group $G$ from Corollary \ref{c:notTame} is an extension of a profinite group by a discrete group. In particular, $G$ is \textit{elementary} in the sense of Wesolek \cite{Wesolek}. 

\begin{corol}
	For the symmetric topological group $G=S_{\infty}$, the conjugation action on $G_c$ is not Rosenthal representable.
	
\end{corol}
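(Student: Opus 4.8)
The plan is to realize the full Bernoulli shift $(\{0,1\}^{\Z},\mathrm{shift})$ as a subaction of the conjugation action of $S_{\infty}$ and then to invoke its non-tameness, exactly in the spirit of the proof of Corollary \ref{c:notTame}. The underlying observation is that a proper $G$-representation on a Rosenthal space (Definition \ref{d:repr}) restricts, along any subgroup $H\leq G$ and to any $H$-invariant subspace, to a proper $H$-representation of the induced system on the \emph{same} Banach space; hence it suffices to locate inside $S_{\infty}$ a subgroup and an invariant piece of its conjugation action carrying a compact metric dynamical system that is not Rosenthal representable.

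To do this I would identify the countable set $\N$ on which $S_{\infty}$ acts with $\Z\times\{0,1\}$, so that $S_{\infty}\cong\operatorname{Sym}(\Z\times\{0,1\})$. Let $\sigma(n,i)=(n+1,i)$ be the shift; since the pointwise stabilizer of $(0,0)$ meets $\langle\sigma\rangle$ only in the identity, $\langle\sigma\rangle$ is discrete and isomorphic to $\Z$. For $f\in\{0,1\}^{\Z}$ let $\tau_f(n,i)=(n,\,i\oplus f(n))$, so that $\tau_f$ flips the fibre over $n$ exactly when $f(n)=1$, and set $C=\{\tau_f:f\in\{0,1\}^{\Z}\}$. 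Then $C$ is precisely the set of permutations fixing each fibre $\{n\}\times\{0,1\}$ setwise, hence a closed subgroup of $\operatorname{Sym}(\Z\times\{0,1\})$; the bijection $\tau_f\leftrightarrow f$ is a topological group isomorphism of $C$ onto $\prod_{\Z}\Z/2\Z$ with the product topology; and a direct computation gives $\sigma\tau_f\sigma^{-1}=\tau_{Tf}$, where $(Tf)(n)=f(n-1)$. Consequently $C$ is invariant under conjugation by $\sigma$, and the resulting cascade $(\langle\sigma\rangle,C)$ is exactly the full Bernoulli shift on $\{0,1\}^{\Z}$. Equivalently, the subgroup $H\leq S_{\infty}$ generated by $C$ and $\sigma$ is a closed copy of $\prod_{\Z}\Z/2\Z\rtimes_{\mathrm{shift}}\Z$, whose conjugation action contains the Bernoulli shift as a subaction by Remark \ref{r:semid}.

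To finish I would argue by contradiction. Suppose $(S_{\infty})_c$ is Rosenthal representable, say via $(h,\alpha)$ on a Rosenthal space $V$. Restricting $h$ to $\langle\sigma\rangle\cong\Z$ and $\alpha$ to the $\langle\sigma\rangle$-invariant subset $C$ produces a proper $\Z$-representation of the cascade $(\langle\sigma\rangle,C)$ on $V$, so the Bernoulli shift $(\{0,1\}^{\Z},\mathrm{shift})$ would be Rosenthal representable. But $\{0,1\}^{\Z}$ is a compact metric $\Z$-system which is not tame (as already used in the proof of Theorem \ref{t:notTame}), so by Fact \ref{f:Ref-Ros} it is not Rosenthal representable; contradiction. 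Alternatively one may simply remark that $\prod_{\Z}\Z/2\Z\rtimes_{\mathrm{shift}}\Z$ is a Polish non-archimedean (indeed elementary locally compact) group, hence embeds as a closed subgroup of $S_{\infty}$, and then quote Corollary \ref{c:notTame} verbatim.

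The only genuinely delicate step is the last assertion of the second paragraph: that the topology $C$ inherits from $S_{\infty}$ is the Cantor product topology and not the discrete one. This is where it is essential that there are infinitely many fibres, so that the basic neighbourhoods of the identity of $S_{\infty}$ — the pointwise stabilizers of finite subsets of $\Z\times\{0,1\}$ — intersect $C$ exactly in the subgroups $\{\tau_f: f|_{[-N,N]}\equiv 0\}$, which form a neighbourhood basis of the identity in $\prod_{\Z}\Z/2\Z$. Granting this, all the remaining verifications (that $C$ is closed, that $\tau_f\mapsto f$ is an isomorphism of topological groups, and the conjugation formula) are routine.
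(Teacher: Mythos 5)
Your proof is correct, but it reaches the conclusion by a genuinely different and more self-contained route than the paper. The paper's own proof is a one-liner: it invokes Corollary \ref{c:notTame} (whose group is $F_{Pro}(\{0,1\}^{\Z})\rtimes_{\a}\Z$, built from the free profinite group of the Cantor set via Theorem \ref{t:notTame}) together with the Becker--Kechris theorem that every second countable non-archimedean group embeds into $S_{\infty}$, and then restricts the conjugation action to that subgroup. You instead exhibit the relevant dynamics inside $S_{\infty}$ \emph{explicitly}: identifying $\N$ with $\Z\times\{0,1\}$, the fibre-flipping involutions form a compact subgroup $C\cong\prod_{\Z}\Z/2\Z$ on which conjugation by the shift $\sigma$ acts as the Bernoulli shift, so the non-tame compact metric $\Z$-system $\{0,1\}^{\Z}$ sits directly inside $(S_{\infty})_c$ and Fact \ref{f:Ref-Ros} finishes the argument. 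This exploits the observation, which the paper does not use, that $\{0,1\}^{\Z}$ is already a profinite group and the shift is already a group automorphism, so for this particular example the free profinite group machinery of Theorem \ref{t:notTame} and the abstract embedding theorem are both unnecessary. What the paper's route buys is generality (it works starting from \emph{any} non-tame zero-dimensional compact metric $\Z$-system, and it factors the argument through the independently interesting Corollary \ref{c:notTame}); what your route buys is concreteness and independence from \cite{RZ}, \cite{MeSh-FrNA13} and \cite{BK}. All the verifications you flag (that $C$ is closed, that its subspace topology is the product topology because the pointwise stabilizers of finite sets cut out exactly the basic product neighbourhoods, and the conjugation formula $\sigma\tau_f\sigma^{-1}=\tau_{Tf}$) do check out, as does the restriction principle for proper representations along a subgroup and an invariant subset, which is the same principle the paper uses via Remark \ref{r:semid}.
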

\begin{proof}
	(3) Use Corollary \ref{c:notTame} and the well-known fact (see \cite[Theorem 1.5.1]{BK}) that every second countable non-archimedean group $G$ is embedded into $S_{\infty}$. 
\end{proof}

In contrast, the left action of $S_{\infty}$ on itself is always Hilbert representable and the topological group $S_{\infty}$ is unitarily representable on a Hilbert space.

\begin{quest}
	Is it true that for $G=\SL_2(\R)$ the $G$-space $G_c$ is Rosenthal 
	representable?
\end{quest}

\sk 
\section{Banach representations of homogeneous actions} 
\label{s:homog} 

For every locally compact second countable group $G$, its regular left action on itself is Hilbert representable. 
However, as we already have seen, the conjugation action of $G$ might not even be  Rosenthal-representable.  
For homogeneous $G$-spaces $G/H$, we have the same phenomenon (for nontrivial $H$).  Our results above shed some light also on Banach representations of homogeneous actions. 
Representability of the $G$-space $G=G/\{e\}$ on nice Banach spaces sometimes  cannot be decisive  about the representations of the $G$-space $G/H$ even for locally compact, as well as for Lie, groups $G$.

\begin{corol} \label{c:homogLie} 
	Let $G:=\T^2 \rtimes \Z$ be the Lie group from Corollary \ref{c:notTame1}. Then for its cocompact discrete subgroup $H:=\Z$, the compact two dimensional homogeneous $G$-space $G/H$ is not Rosenthal representable. 
\end{corol}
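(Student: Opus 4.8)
The plan is to realize the conjugation action of $G = \T^2 \rtimes_\sigma \Z$ on itself as containing, in a suitable sense, a copy of the $\Z$-cascade $(\T^2,\sigma)$ from Theorem \ref{t:Lebed}, and then to transfer non-Rosenthal-representability from that cascade to the homogeneous space $G/H$ with $H = \{e_{\T^2}\} \times \Z \cong \Z$. First I would apply Fact \ref{f:SemidCoset} with $X := \T^2$, $P := \T^2 \rtimes_\sigma \Z = G$, and the automorphism action $\alpha := \sigma$: this identifies the $\Z$-space $(\T^2,\sigma)$ equivariantly as a subaction of the homogeneous $P$-space $P/G_0$, where $G_0 := \{e_{\T^2}\} \times \Z$ plays the role of the subgroup "$G$" in that lemma. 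Note $G_0$ is exactly the cocompact discrete subgroup $H = \Z$ in the statement, so $P/G_0 = G/H$.

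The second step is to observe that $G/H$ is compact and two-dimensional: $H = \Z$ is discrete and cocompact in the two-dimensional Lie group $G$ (the $\T^2$ factor is already compact and $\Z$ sits cocompactly in the $\Z$-direction by construction), and $\dim(G/H) = \dim G - \dim H = 2 - 0 = 2$. This justifies the adjectives in the statement; none of it is needed for the non-representability itself.

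The third step is the transfer. By Fact \ref{f:SemidCoset} the inclusion $j\colon \T^2 \hookrightarrow G/H$ is an equivariant topological embedding intertwining the $\Z$-action $\sigma$ on $\T^2$ with the restriction of the $G$-action on $G/H$ to the subgroup $\Z \le G$. Now by \cite{GM-rose} (invoked exactly as in the proof of Theorem \ref{t:LinAreTame}) Rosenthal representability passes to $G$-subspaces, and it also passes to the restriction of the acting group to a subgroup — more directly, if $G/H$ were Rosenthal representable as a $G$-space it would be representable as a $\Z$-space via the embedding $\Z \hookrightarrow G$, and then its $\Z$-subspace $j(\T^2) \cong (\T^2,\sigma)$ would be Rosenthal representable, contradicting Theorem \ref{t:Lebed}. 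Since $G/H$ is compact, Rosenthal representability here is the same as tameness by Fact \ref{f:Ref-Ros}, so equivalently: $G/H$ is not tame because it has the non-tame subsystem $(\T^2,\sigma)$ (tameness is inherited by subsystems and by restriction of the group). This is essentially the same bookkeeping already used in Corollary \ref{c:notTame1} and Corollary \ref{c:notTame}, just phrased for $G/H$ rather than for $G_c$.

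I do not expect a serious obstacle. The only points requiring a line of care are: checking that Fact \ref{f:SemidCoset}'s subgroup "$G$" is literally our $H = \Z$ (it is, by the normal-subgroup/semidirect-product identifications in Remark \ref{r:semid} and the statement of Fact \ref{f:SemidCoset}), and making sure the passage from "$G$-Rosenthal-representable" to "$\Z$-Rosenthal-representable" is legitimate — which it is, since a $G$-representation on a Rosenthal space $V$ restricts along $\Z \hookrightarrow G$ to a $\Z$-representation on the same $V$. So the proof is a two-line deduction: combine Fact \ref{f:SemidCoset} (for $X=\T^2$, $\alpha=\sigma$) with Theorem \ref{t:Lebed} and the hereditary properties of tameness/Rosenthal representability from \cite{GM-rose}.
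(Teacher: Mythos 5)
Your proposal is correct and follows essentially the same route as the paper: the paper's proof is exactly the application of Fact \ref{f:SemidCoset} to realize the non-tame cascade $(\T^2,\sigma)$ of Theorem \ref{t:Lebed} as a subaction of $G \times G/\Z \to G/\Z$, whence $G/H$ is non-tame and hence not Rosenthal representable. Your extra care about restricting the acting group to $\Z$ and about the identification of the subgroup in Fact \ref{f:SemidCoset} with $H$ is sound bookkeeping that the paper leaves implicit.
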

\begin{proof}
	The original (nontame) action $\Z \times \T^2 \to \T^2$ 
	from Theorem \ref{t:Lebed} is a subaction of the homogeneous action $G \times G/\Z \to G/\Z$ (Fact \ref{f:SemidCoset}). So, the compact $G$-space $G/\Z$ is also nontame.  	
	\end{proof} 

\begin{corol}
	There exists a non-archimedean elementary locally compact group $G$ and a cocompact discrete subgroup $H$ such that the compact coset $G$-space $G/H$ is not Rosenthal representable. 
\end{corol}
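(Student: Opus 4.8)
The plan is to recycle the locally compact group built in Corollary \ref{c:notTame}. Let $K:=\{0,1\}^{\Z}$ be the full Bernoulli shift, $X:=F_{Pro}(K)$ the free profinite group on the Cantor space $K$, and $a\colon \Z\times X\to X$ the action by automorphisms from the proof of Theorem \ref{t:notTame}, which is not Rosenthal representable. Put $G:=X\rtimes_a\Z$ and $H:=\Z=\{e_X\}\times\Z\le G$. First I would check that $G$ has the required properties: $X$ is compact (and metrizable) while $\Z$ is discrete, so $G$ is locally compact Polish; $X$ is profinite and $\Z$ is discrete, so both are non-archimedean and hence so is their topological semidirect product $G$; and, as noted right after Corollary \ref{c:notTame}, $G$ is an extension of a profinite group by a discrete group, hence elementary in the sense of Wesolek. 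Moreover $G/H$ is homeomorphic to $X$ (the underlying space of the semidirect product is $X\times\Z$), so $G/H$ is compact, i.e.\ $H$ is a cocompact discrete subgroup.

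Next I would invoke Fact \ref{f:SemidCoset} with the action $\alpha:=a$: the map $j\colon X\to G/H$, $j(x)=xH$, is a $\Z$-equivariant homeomorphism of $X$ onto all of $G/H$, where $\Z$ acts on $G/H$ as the restriction of the homogeneous $G$-action $\alpha_*$ to the subgroup $\Z\le G$. Thus the (non Rosenthal representable) cascade $(\Z,X,a)$ is literally the restriction of the compact homogeneous $G$-system $G/H$ to the subgroup $\Z$.

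Finally, suppose toward a contradiction that the compact $G$-space $G/H$ were Rosenthal representable, say via $(h,\beta)$ with $h\colon G\to\Iso_l(V)$ and $\beta\colon G/H\hookrightarrow V^*$ for a Rosenthal space $V$ (Definition \ref{d:repr}). Restricting $h$ along the inclusion $\Z\hookrightarrow G$ and composing $\beta$ with $j$ yields a proper representation of the $\Z$-space $X$ on the same Rosenthal space $V$. Hence the $\Z$-system $X$ would be Rosenthal representable, and then tame by Fact \ref{f:Ref-Ros}, contradicting Theorem \ref{t:notTame}. Alternatively one can phrase the last step purely in terms of tameness: $G/H$ tame implies that its restriction to the subgroup $\Z$ is tame, since tameness passes to subgroups.

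There is essentially no hard step here; the only point to be slightly careful about is that Rosenthal representability really does restrict along a subgroup inclusion, which is immediate from Definition \ref{d:repr} because a representation of a $G$-space on $V$ restricts to a representation of the underlying $\Z$-space on the very same $V$. So the whole argument is just assembling Theorem \ref{t:notTame}, Fact \ref{f:SemidCoset}, and Fact \ref{f:Ref-Ros}, in exact parallel with Corollary \ref{c:homogLie}.
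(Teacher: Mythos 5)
Your proposal is correct and follows exactly the paper's route: the same group $G=X\rtimes_a\Z$ with $H=\Z$, the identification of the nontame cascade $(\Z,X,a)$ as a subaction of $G/H$ via Fact \ref{f:SemidCoset}, and the conclusion via Fact \ref{f:Ref-Ros} and Theorem \ref{t:notTame}. You merely spell out details (surjectivity of $j$, the restriction of a representation along $\Z\hookrightarrow G$) that the paper leaves implicit.
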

\begin{proof}
	Let $G:=X \rtimes_{\a} \Z$ be the group from Corollary \ref{c:notTame} with $H:=\Z$. For the rest of the proof, as above, apply Fact \ref{f:SemidCoset}. 
\end{proof}

\begin{corol}
	There exists a closed subgroup $H$ of $G:= \SL_2(\R)$ such that the corresponding locally compact coset $G$-space $G/H$ is not reflexively representable. 
\end{corol}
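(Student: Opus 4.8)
The plan is to take $H := P$, the subgroup of upper triangular matrices in $G := \SL_2(\R)$,
\[
P = \left\{ \begin{pmatrix} a & b \\ 0 & a^{-1} \end{pmatrix} : \ a \in \R \setminus \{0\}, \ b \in \R \right\},
\]
and to identify the coset $G$-space $G/P$ with the projective line $\mathbb{RP}^1 \cong S^1$, on which $G$ acts by the Möbius maps $z \mapsto \frac{az+b}{cz+d}$ of $\R \cup \{\infty\}$. First I would record the elementary facts: $P$ is exactly the stabilizer of the point $\infty$ (equivalently, of the line $\R e_1 \subset \R^2$), the action of $G$ on $\mathbb{RP}^1$ is transitive, so the orbit map induces a $G$-equivariant continuous bijection $G/P \to \mathbb{RP}^1$; since $G/P$ is compact (e.g. by the Iwasawa decomposition $G = \mathrm{SO}(2)\cdot P$, so that $G/P$ is a continuous image of $\mathrm{SO}(2)$) and $\mathbb{RP}^1$ is Hausdorff, this bijection is a $G$-homeomorphism. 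In particular $G/P$ is a compact \emph{metrizable} $G$-space, which is what makes Fact~\ref{f:Ref-WAP} applicable.

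Next I would show that $\mathbb{RP}^1$ is not a WAP $G$-system by exhibiting a single continuous function that fails Grothendieck's double limit property (Fact~\ref{f:WAP}; recall $C(X) \subseteq \RUC(X)$ whenever $X$ is compact). Fix $f \in C(\mathbb{RP}^1)$ with $f(0) = 1$ and $f(\infty) = 0$, and, exactly as in the proof of Theorem~\ref{t:SL(2)}, set $g_n := \mathrm{diag}(n^{-1}, n) \in G$ and $x_m := m \in \R \subset \mathbb{RP}^1$. Then $g_n \cdot x_m = m/n^2$, so in $\mathbb{RP}^1$ one has $\lim_m (g_n \cdot x_m) = \infty$ and $\lim_n (g_n \cdot x_m) = 0$, whence
\[
\lim_n \lim_m f(g_n \cdot x_m) = f(\infty) = 0 \neq 1 = f(0) = \lim_m \lim_n f(g_n \cdot x_m).
\]
Therefore $f \notin \WAP(\mathbb{RP}^1)$, so the compact metrizable $G$-space $\mathbb{RP}^1 \cong G/P$ is not WAP, and by Fact~\ref{f:Ref-WAP} it is not reflexively representable.

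I do not expect a serious obstacle here: the only points requiring care are the explicit identification $G/P \cong \mathbb{RP}^1$ with the correct stabilizer, and the bookkeeping ensuring Fact~\ref{f:Ref-WAP} applies (compact metrizable $G$-space). An alternative to the double limit computation is to observe that $G/P$ is a minimal $G$-flow which is not equicontinuous — the maps $z \mapsto n^2 z$ already fail equicontinuity near $0$ — combined with the classical fact that a minimal WAP flow is equicontinuous; but the displayed argument is self-contained and directly parallels Theorem~\ref{t:SL(2)}. Finally, it is worth remarking that $G/P = \mathbb{RP}^1$ \emph{is} tame (one checks $\mathrm{card}\, E(G/P) \le 2^{\aleph_0}$, so by Fact~\ref{f:Ref-Ros} it is Rosenthal representable), so the statement is sharp: for this $H$, reflexive representability genuinely fails while Rosenthal representability still holds.
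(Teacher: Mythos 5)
Your proof is correct, but it chooses a different subgroup $H$ than the paper does. The paper's own proof takes $H:=St(x_1)=\{g\in \SL_2(\R): gx_1g^{-1}=x_1\}$, the centralizer of the unipotent element $x_1=\left(\begin{smallmatrix}1&1\\0&1\end{smallmatrix}\right)$, so that $G/H$ is identified with the conjugacy class of $x_1$ --- a locally compact, \emph{noncompact} orbit of the conjugation action --- and the failure of WAP is then literally the double-limit computation already carried out in Theorem~\ref{t:SL(2)}; this keeps the corollary an immediate byproduct of that theorem and of the conjugation-action theme. You instead take $H=P$, the Borel subgroup, identify $G/P$ with $\mathbb{RP}^1$, and redo the analogous double-limit computation for the M\"{o}bius action (same $g_n=\mathrm{diag}(n^{-1},n)$, with the points $m\in\R\subset\mathbb{RP}^1$ in place of the matrices $x_m$); your computation is right, the identification of $P$ as the stabilizer of $\infty$ is right, and since compact spaces are locally compact the statement is literally satisfied. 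What your route buys: the example is compact and metrizable, so Fact~\ref{f:Ref-WAP} applies directly without passing through the one-point compactification, and your closing observation that $\mathbb{RP}^1$ is nevertheless tame (the paper itself records this via Ellis's theorem in its final paragraph, where it also notes the projective coset space is sensitive, hence not even Asplund representable) shows the example is sharp in a way the paper's corollary does not make explicit. What the paper's route buys: a genuinely noncompact locally compact coset space arising from the conjugation action, with no additional work beyond Theorem~\ref{t:SL(2)}.
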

	\begin{proof}
			
		Indeed, 
		in Theorem \ref{t:SL(2)} we deal, in fact, with the restricted action by conjugations of $G:= \SL_2(\R)$ on the $G$-orbit of $x_1$. 
		This orbit, containing every 
		$$x_m:= \left( \begin{array}{cc}
			1 & m  \\
			0 & 1  
		\end{array}\right)$$
		(in terms of Theorem \ref{t:SL(2)}) is locally compact and it can be represented as the topological coset $G$-space $G/H$, where $H$ is the stabilizer subgroup 
		$$St(x_1)=\{g \in  \SL_2(\R): gx_1g^{-1}=x_1\}.$$ 
		\end{proof} 

Results of the present section suggest the following general questions. 

\begin{quest} \label{q:RosCoset} 
	Which interesting homogeneous $G$-spaces $G/H$ are Rosenthal representable?  
		What about the  $\SL_n(\R)$-spaces $\SL_n(\R) / H$ ? In particular, what if $H=\SL_n(\Z)$?
\end{quest}


The ``Asplund version" of Question \ref{q:RosCoset} seems to be less attractive than the present ``Rosenthal version". The reason is that 
 by \cite[Theorem 6.10]{Me-nz} and Fact \ref{l:fromHNS}, Asplund representable compact metrizable coset $G$-spaces $G/H$ are necessarily equicontinuous. Therefore they are even Hilbert representable.   
 Many geometric compact coset $G$-spaces are sensitive. Hence, not Asplund representable.  
For example, this is true for the projective action which is 2-transitive. On the other hand, results of R. Ellis \cite{E}  show that the action of 
$G =\mathrm{GL}(d,\R)$ on the projective space
$\mathbb{P}^{d-1}, \ d \ge 2$, is tame (hence, Rosenthal representable).

\bibliographystyle{alpha}

\begin{thebibliography}{10}

\bibitem{BK} H. Becker and A.S. Kechris, \emph{The Descriptive Set Theory of
	Polish Group Actions}, London Math. Soc. 232, Cambridge
University Press, 1996.


\bibitem{BJM}
J.F. Berglund, H.D. Junghenn and P. Milnes,
{\it Analysis on Semigroups}, Wiley, New York, 1989.

\bibitem{Br}
R.B. Brook, {\it A construction of the greatest ambit}, Math.
Systems Theory {\bf 6} (1970), 243--248.

\bibitem{DR}
S.G. Dani and S. Raghavan, \textit{Orbits of Euclidean frames under discrete linear groups}, 
Israel J. Math. \textbf{36}  (1980), 300--320. 


\bibitem{DiTh} D. Dikranjan and W. Tholen,
\emph{Categorical Structure of Closure operators with Applications
	to Topology, Algebra and Discrete Mathematics}, Mathematics and Its
Applications, 346 Kluwer Academic, Dordrecht, 1995.

\bibitem{E}
R. Ellis, {\it The enveloping semigroup of projective flows},
Ergod. Th. Dynam. Sys. {\bf 13} (1993), 635--660.


\bibitem{Gelf-Raikov}
I.M. Gelfand and D.A. Raikov,  \textit{Irreducible representations of locally bicompact groups} (in Russian), Math. Sbornik, 13 (55), 1943, pp. 301-316.  

\bibitem{GM-HNS}
E. Glasner and M. Megrelishvili,
{\em Hereditarily non-sensitive dynamical systems and linear representations},
Colloq. Math. {\bf 104} (2006), no. 2, 223--283.


%
 
 
\bibitem{GM-rose}
E. Glasner and M. Megrelishvili, {\it Representations of dynamical
	systems on Banach spaces not containing $l_1$},
Trans. Amer. Math. Soc. \textbf{364} (2012),  6395--6424.

\bibitem{GM-survey14}
E. Glasner and M. Megrelishvili,
\emph{Representations of dynamical systems on Banach spaces,}
in: Recent Progress in General Topology III, 
Eds. K.P. Hart, J. van Mill, P. Simon,  
Springer-Verlag, Atlantis Press, 2014, 399--470. 



\bibitem{GM-MoreTame18}
E. Glasner and M. Megrelishvili, 
{\it More on tame dynamical systems}, in: Lecture Notes S. vol. 2213, Ergodic Theory and Dynamical Systems in their Interactions with Arithmetics and Combinatorics, Eds.: S. Ferenczi, J. Kulaga-Przymus, M. Lemanczyk,  Springer, 2018, pp. 351--392.  
See also arXiv:2302.09571, 2023. 

\bibitem{GM-UltraHom19} 
E. Glasner and M. Megrelishvili, 
\textit{Circular orders, ultrahomogeneity and topological groups}, 
AMS Contemporary Mathematics book series volume \textbf{772} "Topology, Geometry, and Dynamics: Rokhlin - 100", Eds.: A.M. Vershik, V.M. Buchstaber, A.V. Malyutin,   2021, pp. 133--154.

\bibitem{GM-TC} 
E. Glasner and M. Megrelishvili, 
\textit{Todor\u{c}evi\'{c}' Trichotomy and a hierarchy in the class of tame dynamical systems}, Trans. Amer. Math. Soc., \textbf{375} (2022), 4513--4548. 
	
	\bibitem{GMU}
	E. Glasner, M. Megrelishvili and V.V. Uspenskij, 
	{\em On metrizable enveloping semigroups},
	Israel J. Math. {\bf 164} (2008), 317--332.

\bibitem{Ibar} 
T. Ibarlucia, 
\textit{The dynamical hierarchy for Roelcke precompact Polish groups}, 
Israel J. Math., \textbf{215} (2016), no. 2, 965--1009. 

\bibitem{IbMe} 
T. Ibarlucia and M. Megrelishvili, 
\textit{Maximal equivariant compactification of the Urysohn spaces and other metric structures,} 
Advances in Math. \textbf{380} (2021), 107599.

\bibitem{KL} D. Kerr and H. Li,
{\it Independence in topological and $C^*$-dynamics}, Math. Ann.
{\bf 338}  (2007), 869--926.

	\bibitem{Ko}
A. K\"{o}hler, {\em Enveloping semigroups for flows}, Proc.
of the Royal Irish Academy {\bf 95A} (1995), 179--191.

\bibitem{Leb}
V. Lebedev, \textit{Tame semicascades and cascades generated by 
affine self-mappings of the $d$-torus}, Proc. Amer. Math. Soc.   
\textbf{149} (2021), 4739–-4742. 


\bibitem{Me-EqComp84}
M. Megrelishvili,
{\it Equivariant completions and compact extensions},
Bull. Ac. Sc. Georgian SSR {\bf 115:1} (1984), 21--24.





\bibitem{Me-sing} M. Megrelishvili, \emph{Compactification and factorization in
the category of G-spaces}, in: Categorical Topology, (ed. J.Adamek
and S.MacLane), World Scientific, Singapore, 1989, 220--237.




\bibitem{Me-F} M. Megrelishvili, \emph{Free topological $G$-groups,}
New Zealand J. Math. \textbf{25} (1996), no. 1,
59--72.





\bibitem{Me-nz}
M. Megrelishvili, {\em Fragmentability and representations of
	flows\/}, Topology Proc. {\bfseries 27:2} (2003), 497--544. 
See also:  arXiv:math/0411112.     


\bibitem{Me-opit}
M. Megrelishvili, \emph{Topological transformation groups: selected topics,} in: Open Problems in Topology II 
(ed. Elliott Pearl), Elsevier Science, 2007, 423--438.

\bibitem{Me-hilb}
M. Megrelishvili, {\em Reflexively representable but not Hilbert
	representable compact flows and semitopological semigroups},
Colloquium Math., \textbf{110} (2008), 383--407.


\bibitem{Me-b}
M. Megrelishvili, 
\textit{Topological Group Actions and Banach Representations}, unpublished book, 2021. Available on author's homepage. 



\bibitem{MeSh-FrNA13} M. Megrelishvili and M. Shlossberg,
\textit{Free non-archimedean topological groups,}
Comm. Math. Univ. Carol. \textbf{54} (2013) no. 2, 273--312. 


\bibitem{Pest-epic} V.G. Pestov, \emph{Epimorphisms of Hausdorff groups
	by way of topological dynamics}, New Zealand J. Math. \textbf{26}
(1997), 257--262.

\bibitem{Pe-w}
V.G. Pestov, {\it Topological groups: where to from here?}, Topology
Proc. {\bf 24} (1999), 421--502.

\bibitem{Pe-nbook}
V. Pestov, \textit{Dynamics of infinite-dimensional groups. The
	Ramsey-Dvoretzky-Milman phenomenon,} University Lecture Series,
vol. {\bf40}, AMS, Providence, 2006.

\bibitem{PU} 
V.G. Pestov and V.V. Uspenskij, 
\textit{On epimorphisms in some categories of infinite-dimensional Lie groups}, 
Journal of Lie Theory {\bf 31} (2021), no.~3, 871--884. 

\bibitem{RZ} 
L. Ribes and P.A. Zalesskii, \textit{Profinite Groups}, 2nd ed., Springer, 2010.

\bibitem{RD}
W. Roelcke and S. Dierolf, {\em Uniform Structures on Topological
	Groups and Their Quotients\/}, McGraw-Hill, 1981.

\bibitem{Ros74}
H.P. Rosenthal,
{\em A characterization of Banach spaces containing $\ell_1$\/},
Proc.\ Nat.\ Acad.\ Sci.\ (USA)
{\bfseries 71} (1974), 2411--2413.


\bibitem{Usp-epic} V.V. Uspenskij, \emph{The epimorphism problem for Hausdorff
	topological groups}, Topology Appl., \textbf{57} (1994), 287--294.

\bibitem{Uscurves}
V.V. Uspenskij, {\it The Roelcke compactification of groups of
	homeomorphisms}, Topology Appl. {\bf 111} (2001), 195--205.

\bibitem{UspComp}
V.V. Uspenskij, {\em Compactifications of topological groups},
Proceedings of the Ninth Prague Topological Symposium (Prague,
August 19--25, 2001), (ed. Petr Simon), published 
by Topology Atlas (electronic publication), 2002, 331--346,
arXiv:math.GN/0204144.



\bibitem{Vr-can} J. de Vries, \emph{Can every Tychonoff $G$-space equivariantly
be embedded in a compact Hausdorff $G$-space?}, Math. Centrum 36,
Amsterdam, Afd. Zuiver Wisk., 1975.

\bibitem{Vr-loccom78} J. de Vries, \emph{On the existence of
	$G$-compactifications}, Bull. Acad. Polon. Sci. ser. Math. 
\textbf{26} (1978), 275--280.

%
%


\bibitem{Wesolek} 
P. Wesolek, 
\textit{Elementary totally disconnected locally compact groups}, Proc. London Math. Soc. \textbf{110} (2015), 1387--1434. 

\end{thebibliography}

\end{document}